\newcommand{\ZZ}{\mathbb{Z}}
\newcommand{\X}{\mathcal{X}}
\newcommand{\B}{{\cal B}}
\renewcommand{\O}{{\cal O}}
\newcommand{\aut}{\hbox{{\rm Aut}}}
\newcommand{\HAT}{\hbox{{\rm HAT}}}
\newcommand{\GHAT}{\hbox{{\rm GHAT}}}
\newcommand{\Circ}{\mathrm{Circ}}
\newcommand{\G}{\Gamma}
\renewcommand{\L}{\ell}
\newcommand{\rad}{\mathrm{rad}}
\newcommand{\att}{\mathrm{att}}
\newcommand{\jum}{\mathrm{jum}}
\newcommand{\Alt}{\mathrm{Alt}}
\newtheorem{theorem}{Theorem}[section]
\newtheorem{proposition}[theorem]{Proposition}
\newtheorem{corollary}[theorem]{Corollary}
\newtheorem{lemma}[theorem]{Lemma}
\newtheorem{question}[theorem]{Question}
\newtheorem{problem}[theorem]{Problem}
\theoremstyle{definition}
\newtheorem{construction}[theorem]{Construction}
\begin{document}
\begin{center}
\Large{\textbf{New structural results on tetravalent half-arc-transitive graphs}} \\ [+4ex]
Alejandra Ramos Rivera{\small$^{a,}$\footnotemark $^{,*}$}, \
Primo\v z \v Sparl{\small$^{a, b, c, }$\footnotemark}  
\\ [+2ex]
{\it \small 
$^a$University of Primorska, IAM, Muzejski trg 2, 6000 Koper, Slovenia\\
$^b$University of Ljubljana, Faculty of Education, Kardeljeva plo\v s\v cad 16, 1000 Ljubljana, Slovenia\\
$^c$IMFM, Jadranska 19, 1000 Ljubljana, Slovenia}
\end{center}

\addtocounter{footnote}{-1}
\footnotetext{Supported in part by the Slovenian Research Agency (research program P1-0285 and Young Researchers Grant no. 37541).}
\addtocounter{footnote}{1} 
\footnotetext{Supported in part by the Slovenian Research Agency (research program P1-0285 and research projects N1-0038, J1-6720, J1-7051).

Email addresses: 
alejandra.rivera@iam.upr.si (Alejandra Ramos Rivera),
primoz.sparl@pef.uni-lj.si (Primo\v z \v Sparl).

~*Corresponding author  }

\hrule
\begin{abstract}

Tetravalent graphs admitting a half-arc-transitive subgroup of automorphisms, that is a subgroup acting transitively on its vertices and its edges but not on its arcs, are investigated. One of the most fruitful approaches for the study of structural properties of such graphs is the well known paradigm of alternating cycles and their intersections which was introduced by Maru\v si\v c 20 years ago.  

In this paper a new parameter for such graphs, giving a further insight into their structure, is introduced. Various properties of this parameter are given and the parameter is completely determined for the tightly attached examples in which any two non-disjoint alternating cycles meet in half of their vertices. Moreover, the obtained results are used to establish a link between two frameworks for a possible classification of all tetravalent graphs admitting a half-arc-transitive subgroup of automorphisms, the one proposed by Maru\v si\v c and Praeger in 1999, and the much more recent one proposed by Al-bar, Al-kenai, Muthana, Praeger and Spiga which is based on the normal quotients method. 

New results on the graph of alternating cycles of a tetravalent graph admitting a half-arc-transitive subgroup of automorphisms are obtained. A considerable step towards the complete answer to the question of whether the attachment number necessarily divides the radius in tetravalent half-arc-transitive graphs is made.  
\end{abstract}
\hrule

\begin{quotation}
\noindent {\em \small Keywords: half-arc-transitive, tetravalent, alternating cycle, alternating jump, quotient graph}
\end{quotation}

\section{Introduction}

Throughout this paper all graphs are assumed to be simple, finite, connected and undirected (but with an implicit orientation of the edges when appropriate). Let $\G$ be a graph and let $V(\G)$, $E(\G)$ and $A(\G)$ be the sets of its vertices, edges and arcs, respectively, where an {\em arc} of $\G$ is an ordered pair of vertices $(u,v)$ such that $uv \in E(\G)$ (each edge $uv$ thus gives rise to two arcs $(u,v)$ and $(v,u)$). For a subgroup $G \leq \aut(\G)$ the graph $\G$ is said to be {\em $G$-vertex-transitive}, {\em $G$-edge-transitive} or {\em $G$-arc-transitive} if $G$ acts transitively on $V(\G)$, $E(\G)$ or $A(\G)$, respectively. If $G$ acts vertex- and edge-transitively but not arc-transitively, then $\G$ is said to be {\em $G$-half-arc-transitive}. In this case we also say that $\G$ admits a half-arc-transitive group of automorphisms. If $G=\aut(\G)$ we omit the prefix $\aut(\G)$ in the above definitions. 

The first result concerning graphs admitting a half-arc-transitive action was given by Tutte~\cite{Tutte}, who proved that the valency of such graphs must be even. Since any connected 2-valent graph is a cycle, the smallest interesting valency for the study of graphs admitting a half-arc-transitive group of automorphisms is four. It is thus not surprising that the majority of papers on such graphs deal with the tetravalent ones. A lot of work has been done to reach the long-term goal of obtaining a complete classification of all tetravalent graphs admitting a half-arc-transitive action and/or a complete classification of all half-arc-transitive tetravalent graphs. Despite the fact that numerous papers on the topic have been published in the last half a century the complete classification appears to be a very difficult problem and is currently still out of reach. As a result various restricted subproblems have been considered and different general approaches to the study of tetravalent graphs admitting a half-arc-transitive group of automorphisms have been proposed. For instance, the graphs of restricted orders such as $p^3$, $p^4$, $pq$, $3p$, $4p$, $2pq$, etc., where $p$ and $q$ are prime numbers, have been classified, some even for all valencies (see for instance \cite{AX94, Dob06, FenKwaZho11, FenKwaXuZho08, KutMarSpaWanXu13, Xu92}). The vertex-stabilizers in tetravalent graphs admitting a half-arc-transitive group of automorphisms and the connection of such graphs to maps have also been studied (see for instance \cite{ConPotSpa15, Mar05, MarNed98, MarNed01}). Recently, Poto\v cnik, Spiga and Verret constructed a census~\cite{PotSpiVer15} of all tetravalent graphs admitting a half-arc-transitive group action up to order 1000. Since we will be referring to some of the graphs from the census we mention that they have names of the form $\HAT[n,i]$ or $\GHAT[n,i]$, where $n$ is the order of the graph and the prefix G indicates that the full automorphism group of the graph acts arc-transitively.   
  
One of the most fruitful and general approaches to the study of structural properties of tetravalent graphs admitting a half-arc-transitive action was started 20 years ago by Maru\v{s}i\v{c}~\cite{Mar98}. It is based on the investigation of certain cycles called \textit{alternating cycles}. We give a description of the main ideas but we refer to \cite{Mar98} for details. Let $\G$ be a tetravalent $G$-half-arc-transitive graph for some $G \leq \aut(\G)$. It is easy to see that  the action of $G$ on $A(\G)$ has two paired orbits. Let $\mathcal{O}_G$ be one of them. Then for each edge of $\G$ the orbit $\mathcal{O}_G$ contains exactly one of the two arcs corresponding to this edge, and so $\mathcal{O}_G$ gives rise to an orientation of the edges of $\G$, preserved by the action of $G$. We say that the orientation $\mathcal{O}_G$ is {\em $G$-induced}. A cycle of $\G$ is a {\em $G$-alternating cycle} if each pair of its consecutive edges have opposite orientations in $\mathcal{O}_G$, that is, if traversing the cycle we alternate between traveling with and against the orientation of the edges from $\mathcal{O}_G$. It turns out that all $G$-alternating cycles have the same length, half of which is called the {\em $G$-radius} of $\G$ and is denoted by $\rad_G(\G)$. Moreover, any two $G$-alternating cycles with non-empty intersection share the same number of vertices. This number, denoted by $\att_G(\G)$, is called the {\em $G$-attachment number}, while the intersections themselves are called {\em $G$-attachment sets}. The study of alternating cycles and their intersections has been the topic of several papers with most of the attention given to the {\em loosely}, {\em antipodally} and {\em tightly $G$-attached} graphs in which $\att_G(\G)$ attains one of the extremal values $1$, $2$ or $\rad_G(\G)$, respectively (see for instance \cite{Mar98, MarWall00, PotSpa17}).

The importance of these three special cases is based on the results of \cite{MarPra99}, where it was proved that each tetravalent $G$-half-arc-transitive graph $\G$, where $G\leq \aut(\G)$, is either tightly $G$-attached or admits an imprimitivity block system for $G$ such that the corresponding quotient graph $\G_\B$ is loosely or antipodally attached (see Section~\ref{sec:kernel} for details). The tightly $G$-attached graphs have been classified  (\cite{Mar98, MarPra99,Spa08,Wil04}), while in the remaining cases there is still a lot of work to be done. We would like to point out however, that in~\cite{MarPra99} it was not known whether the mentioned imprimitivity block system for $G$, giving rise to $\G_\B$, can be obtained as the set of orbits of a normal subgroup of $G$ or not. 

This question is of great importance. Namely, in \cite{AlAlMuPrSp2016} a new framework for a possible classification of tetravalent graphs admitting a half-arc-transitive group of automorphisms was proposed. It is based on the well known method of taking normal quotients, which has already led to important results in the study of graphs possessing a considerable degree of symmetry. For instance, its use in the context of $s$-arc-transitive graphs was initiated by Praeger in 1993~\cite{Pra93}. The main idea in our setting is that whenever a half-arc-transitive subgroup $G \leq \aut(\G)$ for a tetravalent graph $\G$ has a normal subgroup $N$ with at least three orbits, the quotient graph with respect to the orbits of $N$ is again a tetravalent graph (provided that it is not a cycle with `doubled edges') admitting a half-arc-transitive subgroup of automorphisms (a quotient group of $G$). One thus aims to classify all `minimal' examples (not admitting such normal subgroups) and to understand how the remaining graphs can be reconstructed from the minimal ones (see~\cite{AlAlMuPrSp2016} for details). Recently, some results of this kind have been obtained in \cite{AlAlMuPr17, 2AlAlMuPr17}. 
\medskip

The results of the present paper enable us to link the above mentioned approaches from~\cite{Mar98, MarPra99} and \cite{AlAlMuPrSp2016}. We first introduce a new parameter for tetravalent graphs admitting a half-arc-transitive action called the \textit{alternating jump} (see Section~\ref{sec:jump} for the definition), which gives a more detailed insight into the structure of such graphs when compared to the one given by simply considering their radius and attachment number. The obtained results enable us to prove that the imprimitivity block system giving rise to $\G_\B$ from~\cite{MarPra99} is in fact obtained by orbits of a normal cyclic subgroup (see Theorem~\ref{the:quotient}), which thus links the quotients of~\cite{MarPra99} to those of~\cite{AlAlMuPrSp2016}.

There is another possibility for the study of tetravalent graphs $\G$ admitting a half-arc-transitive group of automorphisms $G \leq \aut(\G)$, still based on the $G$-alternating cycles. Instead of considering the quotient graph $\G_\B$ one can study the graph $\Alt_G(\G)$ of $G$-alternating cycles introduced in~\cite{PotSpa17} (see Section~\ref{sec:kernel} for the definition). Employing the properties of the alternating jump parameter we determine the kernel of the natural action of $G$ on $\Alt_G(\G)$ (see Theorem~\ref{the:kernel}) and show that this kernel is closely related to the kernel of the natural action of $G$ on $\G_\B$ as well as to the kernel of its action on the set of all $G$-attachment sets of $\G$ (see  Theorem~\ref{the:allkernels}). The group $G$ induces a natural vertex- and edge-transitive action on both $\G_\B$ and $\Alt_G(\G)$.  We consider the question of when the graphs $\G_\B$ and $\Alt_G(\G)$ are half-arc-transitive and when they are arc-transitive, and we indicate some connections between these two graphs.

Finally, we address the question posed in~\cite{PotSpa17}, of whether the attachment number of a tetravalent half-arc-transitive graph (that is a graph in which the full automorphism group is half-arc-transitive) necessarily divides its radius. Using the results on the alternating jump parameter we prove that the answer to this question is in the affirmative, except possibly for the cases when $\att(\G) = 4$ or the alternating jump parameter is equal to $\att(\G)/2 - 1$ (see Theorem~\ref{the:andivr}).
 
\section{Preliminaries}
\label{pre}

As explained in the Introduction the tightly attached graphs are of great importance in the study of tetravalent graphs admitting a half-arc-transitive group of automorphism and have been classified in~\cite{Mar98,MarPra99,Spa08,Wil04}. Since we will be referring to these graphs quite often in this paper we include their definitions for self-completeness. The definition is given in two parts depending on the parity of the radius of the graphs, where we follow~\cite{Spa08}.

\begin{construction}[\cite{Spa08}]
For each $m \geq 3$, $r \geq 3$ odd, $q \in \ZZ_r^*$, where $q^m=\pm 1$, let $\X_o(m,r;q)$ be the graph with vertex set $V=\{u^j_i \colon i \in \ZZ_m, j \in \ZZ_r\}$ and edges defined by the following adjacencies: 
$$
u^j_i \sim \left\{\begin{array}{lcl}
u^{j\pm r^i}_{i+1} & ; &  i \in \ZZ_m ,j\in \mathbb{Z}_r.
\end{array}\right.
$$
\end{construction}

\begin{construction}[\cite{Spa08}]
Let $m\geq 4$, $r \geq 4$ and for each $q \in \ZZ_r^*$, $t \in \ZZ_n$ satisfying 
$$
q^m=1\mbox{,            } t(q-1)=0\mbox{       and         }  1+q+\cdots+q^{m-1}+2t=0,
$$ 
let $\X_e(m,r;q,t)$ be the graph with vertex set $V=\{u^j_i \colon i \in \ZZ_m, j \in \ZZ_r\}$ and edges defined by the following adjacencies:
$$
u^j_i \sim \left\{\begin{array}{lcl}
u^{j}_{i+1}, u^{j+r^i}_{i+1} & ; &  i \in \ZZ_m \backslash \{m-1\} ,j\in \mathbb{Z}_r\\
u^{j+t}_{0}, u^{j+r^{m-1}+t}_{0}& ; &  i = m-1 , j\in \mathbb{Z}_r.

\end{array}\right.
$$
\end{construction}
We remark that when these graphs where introduced in~\cite{Spa08}, different letters for the second and third parameter of the graphs were used. However, since the second parameter is in fact the radius of these graphs and we show that the third parameter coincides with the new parameter from Section~\ref{sec:jump} which we consistently denote by $q$ (see Section~\ref{sec:GTA}), we decided to denote the second parameter by $r$ and third by $q$, not to confuse the reader. 

Let $\G$ be a tetravalent graph admitting a half-arc-transitive group of automorphisms $G \leq \aut(\G)$. As explained in the Introduction the action of $G$ induces two paired orientations of the edges of $\G$. Throughout the paper we will be dealing with situations when we fix one of these two orientations. Let $u, v$ be a pair of adjacent vertices of $\G$. In the case that the edge $uv$ is oriented from $u$ to $v$ we say that $u$ is the {\em tail} and that $v$ is the {\em head} of the edge $uv$ (and of the arc $(u,v)$). Let us point out a useful fact that we use recurrently in this paper. Since the orientation of the edges of $\G$ is given by the action of $G$, the elements of $G$ permute the $G$-alternating cycles, and so $G$ induces a natural action on the set of all $G$-alternating cycles as well as on the set of the $G$-attachment sets. 

A half-arc-transitive subgroup $G$ of automorphisms of a tetravalent graph $\G$ may have large vertex-stabilizers (see for instance~\cite{Mar05, MarNed01}). However, by the following result of \cite{MarPra99} which we will be using in this paper, this cannot occur if the $G$-attachment number $\att_G(\G)$ is at least $3$. 

\begin{proposition}\cite[Lemma~3.5.]{MarPra99}
\label{pro:stab}
Let $\G$ be a tetravalent graph admitting a half-arc-transitive group of automorphisms $G \leq \aut(\G)$. If $\att_G(\G) \geq 3$ then $G_v \cong \ZZ_2$ for all $v \in V(\G)$.
\end{proposition}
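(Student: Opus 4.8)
The plan is to show that the natural \emph{local} action of $G_v$ is faithful with image isomorphic to $\ZZ_2$. With respect to the orientation $\mathcal{O}_G$ each vertex $v$ has exactly two out-neighbours $a,b$ and two in-neighbours $c,d$, and since every element of $G_v$ preserves $\mathcal{O}_G$ it permutes the out-neighbours among themselves and the in-neighbours among themselves. This yields a homomorphism $\lambda \colon G_v \to \Sym(\{a,b\}) \times \Sym(\{c,d\}) \cong \ZZ_2 \times \ZZ_2$. I would first determine its image. Writing $\mathcal{O}_G$ for the chosen arc-orbit, we have $|\mathcal{O}_G| = |E(\G)| = 2|V(\G)|$, so by the orbit--stabiliser lemma $|G_{(v,a)}| = |G|/|\mathcal{O}_G| = |G_v|/2$; hence the $G_v$-orbit of the arc $(v,a)$ has length $2$, forcing $G_v$ to interchange $a$ and $b$. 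The symmetric argument applied to the arcs with head $v$ shows that $G_v$ also interchanges $c$ and $d$. Consequently $\mathrm{im}\,\lambda$ surjects onto both direct factors, so it is either all of $\ZZ_2 \times \ZZ_2$ or the diagonal subgroup of order $2$.

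The crucial step, and the only place where the hypothesis $\att_G(\G) \geq 3$ is used, is to rule out the possibility $\mathrm{im}\,\lambda = \ZZ_2 \times \ZZ_2$. Here I would bring in the two $G$-alternating cycles through $v$: the cycle $C$ on which $v$ is a source (its two incident edges on $C$ being the out-edges $va$, $vb$) and the (necessarily distinct) cycle $C'$ on which $v$ is a sink. Since these cycles are determined by $v$ and the orientation, every $g \in G_v$ fixes each of $C$ and $C'$ setwise, and, fixing the vertex $v$ of the cycle, acts on each either trivially or as the reflection through $v$. Suppose now that some $g \in G_v$ fixed both out-neighbours but interchanged $c$ and $d$. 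Then $g$ fixes the two adjacent vertices $v,a$ of $C$ and hence fixes $C$ pointwise, while it acts on $C'$ as the nontrivial reflection through $v$, which fixes at most two of the $2\rad_G(\G)$ vertices of $C'$. But $C$ and $C'$ meet in a $G$-attachment set of size $\att_G(\G) \geq 3$, and all these common vertices lie on $C$ and are therefore fixed by $g$ --- contradicting the fact that $g$ fixes at most two vertices of $C'$. The mirror-image case is identical. Hence $\mathrm{im}\,\lambda$ is the diagonal of order $2$; equivalently, every element of $G_v$ fixes the out-neighbours of $v$ if and only if it fixes its in-neighbours, and by vertex-transitivity this ``diagonal'' property holds at every vertex of $\G$.

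It then remains to prove that $\lambda$ is injective. If $g \in G_v$ fixes all four neighbours of $v$, then, as above, it fixes two adjacent vertices on each of $C$ and $C'$ and so fixes both cycles pointwise. I would propagate this as follows: for any $w \in V(C)$ the element $g$ fixes $w$ together with its two neighbours on $C$; the diagonal property at $w$ then forces $g$ to fix the remaining two neighbours of $w$ as well, that is, to fix pointwise the second $G$-alternating cycle through $w$. Thus $g$ fixes pointwise every $G$-alternating cycle sharing a vertex with $C$, and since $\G$ is connected the $G$-alternating cycles are linked into a single connected intersection pattern, so $g$ fixes every vertex of $\G$ and $g = 1$. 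Combining the two parts gives $G_v \cong \mathrm{im}\,\lambda \cong \ZZ_2$.

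The main obstacle is the middle step: everything hinges on exploiting $\att_G(\G) \geq 3$ to defeat a cycle-reflection, and it is exactly here that the statement would fail for loosely or antipodally attached graphs, where the attachment sets are too small to overwhelm the two fixed points of a reflection. The remaining ingredients --- the orbit count, the elementary structure of the automorphisms of a cycle fixing a vertex, and the connectivity propagation --- are routine.
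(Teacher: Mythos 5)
Your proof is correct. Note, however, that the paper itself contains no proof of this statement: it is imported verbatim as \cite[Lemma~3.5]{MarPra99} (this is why it is labelled a Proposition with a citation), so there is no internal argument to compare yours against. Your blind reconstruction is a sound, self-contained proof in the spirit of the original one of Maru\v si\v c and Praeger: the orbit count on $\mathcal{O}_G$ correctly forces $G_v$ to act transitively on both the out-pair and the in-pair of neighbours; the hypothesis $\att_G(\G) \geq 3$ is used exactly where it must be, namely to rule out the mixed local action, since an element fixing two adjacent vertices of one $G$-alternating cycle $C$ through $v$ fixes $C$ pointwise, hence fixes the $\att_G(\G) \geq 3$ vertices of $C \cap C'$, while a reflection of the even cycle $C'$ through $v$ fixes only $v$ and its antipode; and the final propagation (the ``diagonal'' property transported to every vertex by vertex-transitivity, plus connectedness of the intersection pattern of alternating cycles) correctly yields faithfulness of the local action. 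Two small points worth making explicit if you write this up: first, the four neighbours $a,b,c,d$ are pairwise distinct because $\G$ is simple, so the two alternating cycles through $v$ really are distinct and $\lambda$ is well defined; second, in the degenerate case $\att_G(\G) = 2\rad_G(\G)$ the two cycles share all their vertices, but your reflection argument goes through unchanged, so no case needs to be excluded.
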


For the sake of completeness we also include the definition of a circulant graph. Let $n$ be an integer and let $S \subset \ZZ_n$ be an inverse closed subset of the additive group $\ZZ_n$ of residue classes modulo $n$, where $0 \in S$. The {\em circulant} graph $\Circ_n(S)$ if the graph with vertex set $\ZZ_n$ in which two vertices $i, j \in \ZZ_n$ are adjacent if and only if $j-i \in S$. 

\section{The alternating jump parameter}
\label{sec:jump}

Let $\G$ be a tetravalent $G$-half-arc-transitive graph for some $G \leq \aut(\G)$. Fix one of the two paired orientations of the edges of $\G$, induced by the action of $G$, and let $r = \rad_{G}(\G)$, $a = \att_{G}(\G)$. For a vertex $v \in V(\G)$ let $C=(u_0,u_1, \ldots, u_{2r-1})$ and $C'=(v_0,v_1, \ldots, v_{2r-1})$ be the two $G$-alternating cycles containing $v$, where $u_0 = v_0 = v$ and $v$ is the tail of the two arcs of $C$, incident to it. By \cite[Lemma~2.6]{Mar98}, the $G$-attachment set $V(C) \cap V(C')$ containing $v$, which we abbreviate by $C \cap C'$ throughout the paper, is
\begin{equation}
\label{eq:intersect}
C \cap C' = \{u_{i\L}: 0\leq i < a\} = \{v_{i\L}:0\leq i<a\},
\end{equation}
where $\L=2r/a$. Define 
$$
q_t(v)=\min\{q:v_{q\L}\in \{u_{\L},u_{-\L}\}\} \mbox{ and } q_h(v)=\min\{q:u_{q\L}\in \{v_{\L},v_{-\L}\}\},
$$ 
where in the case of $a = 1$ this is understood as $q_t(v) = q_h(v) = 0$.
Observe that, by definition, $q_t(v), q_h(v) \leq a/2$. Moreover, since $G$ acts vertex- and edge-transitively on $\G$, the parameters $q_t(v)$ and $q_h(v)$ do not depend on the choice of the vertex $v$. Note also that taking the other of the two $G$-induced orientations of the edges of $\G$ reverses the roles of $q_t(v)$ and $q_h(v)$. For each tetravalent $G$-half-arc-transitive graph $\G$ we can thus define $Q_{G}(\G)=\{q_t,q_h\}$, where $q_t=q_t(v)$ and $q_h=q_h(v)$ for some $v \in V(\G)$ with respect to one of the two $G$-induced orientations of the edges of $\G$. For instance, in the case of loosely $G$-attached graphs, that is when $\att_G(\G)=1$, we have $Q_{G}(\G)=\{0\}$, and in the case of antipodally $G$-attached graphs, that is when $\att_G(\G) = 2$, we have $Q_{G}(\G) = \{1\}$. 

\begin{lemma}
\label{le:mult}
Let $\G$ be a tetravalent $G$-half-arc-transitive graph for some $G \leq \aut(\G)$ and let $r = \rad_{G}(\G)$, $a = \att_{G}(\G)$ and $\L=2r/a$. Fix one of the two $G$-induced orientations of the edges of $\G$ and let $v \in V(\G)$. Let $C=(u_0,u_1, \ldots, u_{2r-1})$ and $C'=(v_0,v_1, \ldots, v_{2r-1})$ be the two $G$-alternating cycles containing $v$, where $u_0 = v_0 = v$, $v$ is the tail of the two arcs of $C$, incident to it, and $u_{\L} = v_{q_t\L}$. Then $u_{i\L}=v_{iq_t\L}$ holds for each $0\leq i < a$. Similarly, depending on whether $v_{\L} = u_{q_h\L}$ or $v_{\L} = u_{-q_h\L}$ holds, we have that $v_{i\L} = u_{iq_h\L}$ holds for each $0\leq i < a$ or $v_{i\L} = u_{-iq_h\L}$ holds for each $0\leq i < a$.
\end{lemma}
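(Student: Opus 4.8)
The plan is to encode the whole statement in a single bijection. Since both $\{u_{i\L}:0\le i<a\}$ and $\{v_{j\L}:0\le j<a\}$ enumerate the attachment set $C\cap C'$ by (\ref{eq:intersect}), I define $f$ on the residues modulo $a$ by the rule $u_{i\L}=v_{f(i)\L}$. From $u_0=v_0=v$ we get $f(0)=0$, and the hypothesis $u_\L=v_{q_t\L}$ gives $f(1)=q_t$. The conclusion $u_{i\L}=v_{iq_t\L}$ is then precisely the assertion that $f$ is multiplication by $q_t$, and since $f$ is a bijection it suffices to establish the recurrence $f(i+1)=f(i)+q_t$. The cases $a\le 2$ are covered by the convention and are immediate, so I assume $a\ge 3$. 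The engine of the proof is to manufacture, from the transitivity of $G$ on the oriented edge set $\mathcal{O}_G$, an automorphism that advances the $C$-index of the attachment vertices by one step, and then to read off what it does to their $C'$-index.

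Assume first that $u_\L$ is a source of $C$, i.e.\ that $\L=2r/a$ is even (equivalently $a\mid r$). Then both $(u_0,u_1)$ and $(u_\L,u_{\L+1})$ lie in $\mathcal{O}_G$, so transitivity on $\mathcal{O}_G$ yields $g\in G$ with $g(u_0)=u_\L$ and $g(u_1)=u_{\L+1}$; propagating along consecutive arcs of $C$ forces $g(u_i)=u_{i+\L}$, so $g$ fixes $C$ setwise and acts on it as the rotation by $\L$. As $g(v)=u_\L\in C\cap C'$ has the same two alternating cycles $C,C'$, and $g$ fixes $C$, it must fix $C'$ as well. On $C'$ the orientation-preserving map $g$ sends $v_0$ to $v_{q_t\L}$, so it is either the rotation $v_j\mapsto v_{j+q_t\L}$ or the reflection $v_j\mapsto v_{q_t\L-j}$. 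In the reflection case, comparing $g(u_{i\L})=u_{(i+1)\L}$ computed along $C$ with its value $v_{(q_t-f(i))\L}$ computed along $C'$ gives $f(i+1)=q_t-f(i)$, hence $f(2)=0=f(0)$, contradicting injectivity of $f$ since $a\ge 3$. Thus $g$ acts as the rotation on $C'$ too, and the same comparison now yields $f(i+1)=f(i)+q_t$, as required.

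The delicate case is when $u_\L$ is a sink of $C$, i.e.\ $\L$ is odd ($a\nmid r$) --- which is exactly the regime tied to the divisibility question $\att\mid\rad$ addressed later in the paper. Here no element of $G$ can fix $C$ while carrying the source $u_0$ to the sink $u_\L$; instead the automorphism produced by $\mathcal{O}_G$-transitivity (now mapping $(u_0,u_1)$ to a forward arc issuing from $u_\L=v_{q_t\L}$ along $C'$) \emph{interchanges} $C$ and $C'$. I would run the argument through this swap automorphism $g$ together with the genuine rotation of $C$ by $2\L$ (which does exist, as $u_{2\L}$ is again a source and gives the two-step recurrence $f(i+2)=f(i)+f(2)$ once the reflection alternative is excluded as above), and use the compatibility of $g$ on the overlap $C\cap C'$ to pin down $f(2)=2q_t$, from which $f(i)=iq_t$ follows. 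This bookkeeping --- deciding in each instance whether an induced map is a rotation or a reflection, and discarding the reflections via injectivity of $f$ --- is where the genuine work lies, and I expect this parity case to be the main obstacle.

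Finally, the assertion about $q_h$ is the mirror image of the above: replacing the tail-based automorphisms by head-based ones (equivalently, exchanging the roles of $C$ and $C'$, and of the two $G$-induced orientations), the same scheme produces an automorphism advancing the $C'$-index by one and shows that the inverse reindexing $v_{j\L}=u_{h(j)\L}$ satisfies $h(j+1)=h(j)\pm q_h$, the constant step being $+q_h$ or $-q_h$ according to whether $v_\L=u_{q_h\L}$ or $v_\L=u_{-q_h\L}$. This gives $v_{i\L}=u_{iq_h\L}$ in the first case and $v_{i\L}=u_{-iq_h\L}$ in the second, completing the proof.
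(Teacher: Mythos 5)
Your argument for the case $\L$ even is correct and complete, and it is genuinely different from the paper's proof (which uses no rotations at all). The gap is in the case $\L$ odd, which you yourself flag as a sketch. Two things go wrong there. A minor one: your exclusion of the reflection alternative for the $2\L$-step rotation $h$ reads ``$f(4)=f(2)-f(2)=0=f(0)$, contradicting injectivity'', but modulo $a=4$ this is no contradiction, and $a=4$ does occur in the regime $a\nmid r$ (e.g.\ $r=6$, $a=4$, $\L=3$); that case needs a separate (easy, parity-based) argument. The decisive one: the step ``use the compatibility of $g$ on the overlap $C\cap C'$ to pin down $f(2)=2q_t$'' cannot work as described. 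Writing your swap as $g(u_i)=v_{q_t\L+\delta i}$ and $g(v_j)=u_{\L+\epsilon j}$ with $\delta,\epsilon\in\{\pm 1\}$, its compatibility on the overlap is the relation $f(1+\epsilon f(i))\equiv q_t+\delta i \pmod a$, and this relation together with your two-step recurrence $f(i+2)=f(i)+f(2)$ admits solutions with $f(2)\neq 2q_t$. Concretely, take $a=12$, $q_t=1$ and $f(2k)\equiv -2k$, $f(2k+1)\equiv 1-2k \pmod{12}$: this is a parity-preserving bijection with $f(0)=0$ and $f(1)=q_t=\min\{f(1),f(-1)\}$ (so the Lemma's normalization holds), it satisfies $f(i+2)=f(i)+f(2)$ and $f(1-f(i))\equiv 1-i$ for all $i$ (the compatibility relation with $\delta=\epsilon=-1$), and yet $f(2)=10\neq 2q_t$; indeed this $f$ is not multiplication by anything. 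What your constraint system fails to encode is exactly the coprimality $\gcd(q_t,a)=1$, equivalently the antisymmetry $f(-i)=-f(i)$ --- and you may not assume either, since in the paper coprimality is Lemma~\ref{le:inverse}, which is deduced \emph{from} Lemma~\ref{le:mult}.

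The missing idea, which is the paper's entire proof, is Proposition~\ref{pro:stab}: since $a\geq 3$, the stabilizer $G_{u_\L}$ contains a unique involution $\gamma$, and since edge stabilizers are trivial ($|G|=2|V(\G)|=|E(\G)|\cdot|G_e|$ with $|E(\G)|=2|V(\G)|$), $\gamma$ fixes no neighbour of $u_\L$. Being orientation-preserving, $\gamma$ fixes setwise both alternating cycles through $u_\L$, which are precisely $C$ and $C'$, so it acts on each as the reflection about their common vertex $u_\L=v_{q_t\L}$ --- and this is true whether $u_\L$ is a source or a sink, so the parity dichotomy that derails your proof never arises. Then $u_{2\L}=u_0\gamma=v_0\gamma=v_{2q_t\L}$, and repeating the reflection argument at $u_{i\L}=v_{iq_t\L}$ gives the claim by induction; in your language, the reflection is what supplies $f(-i)=-f(i)$, which combined with your recurrence forces $f(2)=2q_t$. (Your scheme can in fact be salvaged without stabilizers: use \emph{both} forward arcs at $u_\L$, i.e.\ both swaps $(u_0,u_1)\mapsto(v_{q_t\L},v_{q_t\L\pm 1})$; their two compatibility relations necessarily carry opposite signs $\epsilon$, and substituting the recurrence into them and combining the resulting congruences yields $f(2)\equiv 2q_t\pmod a$. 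But this is precisely the bookkeeping you deferred, and as written the odd case is a genuine gap.)
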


\begin{proof}
Observe that if $a \leq 2$, there is nothing to prove. We can thus assume that $a \geq 3$, and so Proposition~\ref{pro:stab} applies. Let $\gamma$ be the unique nontrivial element of $G_{u_\L}$ and observe that, since $G$ is edge-transitive on $\G$, it does not fix any of the neighbors of $u_\L$. Then $\gamma$ fixes both $C$ and $C'$ setwise, and so the restriction of its action to $C$ (respectively $C'$) is the reflection with respect to $u_\L$ (respectively $v_{q_t\L}$). Thus $u_{2\L} = u_0\gamma = v_0\gamma = v_{2q_t\L}$. We can now continue inductively to see that $u_{i\L}=v_{iq_t\L}$ holds for each $0\leq i < a$. The second part of the lemma can be proved analogously.
\end{proof}

The following result shows that the parameters $q_t$ and $q_h$ are in fact nicely related.

\begin{lemma} 
\label{le:inverse}
Let $\G$ be a tetravalent $G$-half-arc-transitive graph for some $G \leq \aut(\G)$, let $a = \att_G(\G)$ and let $Q_{G}(\G)=\{q_t, q_h\}$. Then $\gcd(a,q_t) = \gcd(a,q_h) = 1$ and $q_t q_h \equiv \pm 1 \pmod{a}$.
\end{lemma}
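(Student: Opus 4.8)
The plan is to reduce everything to the two index identities furnished by Lemma~\ref{le:mult} and then read off both conclusions from the fact that the $G$-attachment set $C\cap C'$ gets enumerated in two ways. First I would dispose of the small cases directly: if $a=1$ the graph is loosely $G$-attached, so $Q_G(\G)=\{0\}$, and both claims are immediate since $\gcd(1,0)=1$ and all integers coincide modulo~$1$; if $a=2$ it is antipodally $G$-attached, so $Q_G(\G)=\{1\}$, and $\gcd(2,1)=1$ together with $1\cdot 1\equiv 1\pmod 2$ settles the statement. Hence I may assume $a\ge 3$ and fix the orientation and the labelling of the two $G$-alternating cycles $C=(u_0,\dots,u_{2r-1})$ and $C'=(v_0,\dots,v_{2r-1})$ through a vertex $v$ exactly as in Lemma~\ref{le:mult}, so that in particular $u_\ell=v_{q_t\ell}$. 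Lemma~\ref{le:mult} then gives $u_{i\ell}=v_{iq_t\ell}$ for all $0\le i<a$ and, for one fixed sign $\varepsilon\in\{1,-1\}$, $v_{i\ell}=u_{\varepsilon i q_h\ell}$ for all $0\le i<a$.

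Next I would extract the coprimality conditions. By \eqref{eq:intersect} the attachment set $C\cap C'$ consists of the $a$ pairwise distinct vertices $u_{i\ell}$ with $0\le i<a$; since $\ell a=2r$, reading the index $i\ell$ modulo $2r$ is the same as reading $i$ modulo $a$, so these vertices are naturally indexed by $\ZZ_a$. As $i$ runs over $\ZZ_a$ the identity $u_{i\ell}=v_{iq_t\ell}$ lists every vertex of $C\cap C'$ exactly once, while each such vertex equals $v_{j\ell}$ for a unique $j\in\ZZ_a$; therefore the map $i\mapsto iq_t$ is a bijection of $\ZZ_a$, which forces $\gcd(a,q_t)=1$. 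Applying the same reasoning to $v_{i\ell}=u_{\varepsilon i q_h\ell}$ yields $\gcd(a,q_h)=1$.

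Finally I would compose the two relations. Substituting $j=iq_t$ into $v_{j\ell}=u_{\varepsilon j q_h\ell}$ and using $v_{iq_t\ell}=u_{i\ell}$ gives $u_{i\ell}=u_{\varepsilon i q_t q_h\ell}$ for every $i\in\ZZ_a$. As the vertices $u_{k\ell}$ are distinct for $k\in\ZZ_a$, this means $i\equiv \varepsilon i q_t q_h\pmod a$ for all $i$, and taking $i=1$ delivers $q_t q_h\equiv\varepsilon\equiv\pm 1\pmod a$, as claimed.

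I expect the only real subtlety to lie in the bookkeeping rather than in any deep idea: one must consistently pass between indices modulo $2r$ on the cycles and indices modulo $a$ on the attachment set, and one must carry the single sign $\varepsilon$ from Lemma~\ref{le:mult} unchanged through the substitution, so that it is precisely this sign that surfaces in $q_t q_h\equiv\pm 1$. Once Lemma~\ref{le:mult} is available, no structural input beyond Proposition~\ref{pro:stab} (already invoked there) is required.
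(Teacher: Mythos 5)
Your proof is correct and takes essentially the same approach as the paper's: both handle $a\le 2$ trivially, derive coprimality from the fact that Lemma~\ref{le:mult} together with (\ref{eq:intersect}) makes $i\mapsto iq_t$ a bijection of $\ZZ_a$, and get $q_tq_h\equiv\pm 1\pmod{a}$ by the same index computation on $C\cap C'$. The only cosmetic difference is that the paper evaluates $u_{i\ell}=v_{iq_t\ell}$ at $i=q_h$ and invokes the definition of $q_h$ directly, whereas you compose the two relations of Lemma~\ref{le:mult}; these are the same calculation.
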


\begin{proof}
We can again assume that $a \geq 3$. Adopt the notation of Lemma~\ref{le:mult}. That $\gcd(a,q_t) = \gcd(a,q_h) = 1$ follows directly from (\ref{eq:intersect}) and Lemma~\ref{le:mult}. To see that $q_t q_h \equiv \pm 1 \pmod{a}$ observe that Lemma~\ref{le:mult} implies that $u_{q_h\L} = v_{q_h q_t\L}$ holds (recall that $q_h < a$). By definition of $q_h$ we thus get $v_{\pm\L} = v_{q_h q_t\L}$, that is $q_h q_t \L \equiv \pm \L \pmod{2r}$.
Since $2r=a\L$,  we obtain $q_hq_t \equiv \pm 1 \pmod{a}$.
\end{proof}

We remark that both $q_hq_t \equiv 1 \pmod{a}$ and $q_hq_t \equiv -1 \pmod{a}$ can occur. For instance, in the smallest half-arc-transitive graph, the well known Doyle-Holt graph $\X_o(3,9;2)$ (see~\cite{AlsMarNow94}), we have $a = 9$ and $Q = \{2,4\}$, and so $q_tq_h \equiv -1 \pmod{a}$, while for both of the graphs $\X_e(4,20;3,0)$ and $\X_e(4,20;3,10)$ we have $a = 20$ and $Q=\{3,7\}$, and so $q_hq_t\equiv 1 \pmod{a}$.

Observe that Lemma~\ref{le:inverse} implies that in fact $q_h, q_t < a/2$ unless $a = 2$ in which case of course $q_h = q_t = 1 = a/2$. Moreover, Lemma~\ref{le:inverse} implies that $q_t$ is uniquely expressible as the smaller of the elements $\pm q_h^{-1}$ in $\ZZ_a$ and that, conversely, $q_h$ is the smaller of the elements $\pm q_t^{-1}$ in $\ZZ_a$. We can thus define the parameter $\jum_G(\G) = \min(Q_G(\G))$ of a tetravalent $G$-half-arc-transitive graph $\G$ where $G \leq \aut(\G)$. We call $\jum_G(\G)$ the {\em $G$-alternating jump} of $\G$. In the case that $G = \aut(\G)$ we abbreviate $\jum_{\aut(\G)}(\G)$ to $\jum(\G)$ and speak of the {\em alternating jump} of $\G$. Note also that Lemma~\ref{le:inverse} implies that the circulants $\Circ_a(\{\pm 1, \pm q_t\})$ and $\Circ_a(\{\pm 1, \pm q_h\})$ are isomorphic, so we say that $\Circ_a(\{\pm 1, \pm \jum_G(\G)\})$ is the {\em associated circulant} of the pair $(\G,G)$, where in the case of $a = 1$ we disregard the loop and consider the associated circulant simply as a one-vertex graph. In the case of $a \leq 2$ the circulant is somewhat degenerate but in general it is tetravalent if and only if $\jum_G(\G) \neq 1$.
This brings us to the following natural question.

\begin{question}
Let $a \geq 3$ be an integer and $1 \leq q < a/2$ be coprime to $a$. Does there exist a tetravalent graph $\G$ with a half-arc-transitive subgroup of automorphisms $G$ such that the associated circulant of the pair $(\G,G)$ is isomorphic to $\Circ_a(\{\pm 1, \pm q\})$?
\end{question}

We remark that the associated circulant of a pair $(\G, G)$, where $\G$ is a tetravalent $G$-half-arc-transitive graph may or may not be arc-transitive. For instance, for the half-arc-transitive graph $\X_o(3,13;3)$ the attachment number and the alternating jump are $13$ and $3$, respectively (see Proposition~\ref{pro:GTA}), and one can easily check that the circulant $\Circ_{13}(\{\pm 1, \pm 3\})$ is not arc-transitive. On the other hand, one can verify that the half-arc-transitive graph $\HAT[500,6]$ from the census~\cite{PotSpiVer15} has attachment number $5$ and the alternating jump parameter $2$, and so the associated circulant $\Circ_5(\{\pm 1, \pm 2\})$, which is the complete graph on $5$ vertices, is arc-transitive. Nevertheless, we think it is worth studying whether the fact that the associated circulant of a pair $(\G, G)$ is or is not arc-transitive has any implications on the graph $\G$. 

It is well known that a tetravalent half-arc-transitive graph of given order is not uniquely determined by its radius and attachment number. For instance, \cite[Theorem~3.4]{Mar98} implies that the graphs $\X_o(6,13;2)$ and $\X_o(6,13;3)$ are both half-arc-transitive with radius $13$ and attachment number $13$. It is not difficult to prove that they are not isomorphic. One of the ways to see this is by inspecting their alternating jump parameter (denoted by $q$ in the remainder of this paragraph). Namely, it can directly be verified (but see also Proposition~\ref{pro:GTA}) that the graph $\X_o(6,13;2)$ has $q = 2$, while the graph $\X_o(6,13;3)$ has $q = 3$. This shows that the parameter $q$ does give a further refinement in the classification of all tetravalent half-arc-transitive graphs. Unfortunately, even the triple $(r,a,q)$ does not uniquely determine a tetravalent half-arc-transitive graph of given order. For instance, \cite[Theorem~1.3, Proposition~9.1]{Spa08} imply that the graphs $\X_e(4,20;3,0)$ and $\X_e(4,20;3,10)$ are both half-arc-transitive with radius $20$ and attachment number $20$, but are not isomorphic. However, both also have $q = 3$, which thus shows that nonisomorphic tetravalent half-arc-transitive graphs with the same order, radius, attachment number and alternating jump parameter exist. Nevertheless, we show in the remainder of this paper that the alternating jump parameter does give a very useful insight into the structure of tetravalent graphs admitting a half-arc-transitive group of automorphisms.

 \section{The alternating jump of tightly attached graphs}
 \label{sec:GTA}
 
The examples from the last paragraph of the previous section are all tightly attached half-arc-transitive graphs. The fact that their alternating jump parameter is actually one of their defining parameters is not a coincidence. In this section we show that the alternating jump parameter of tetravalent graphs admitting a half-arc-transitive group action relative to which they are tightly attached is determined by their defining parameters from the classifications given in~\cite{Mar98, Spa08}. Depending on whether the corresponding radius is odd or even, respectively, the classification of such graphs is given in the following two propositions, which can be extracted from \cite[Propositions 3.2, 3.3]{Mar98} and \cite[Theorem~1.2]{Spa08}, respectively (see also~\cite[Theorem~4.5]{MarPra99} and~\cite[Theorem~8.1]{Wil04}). 

\begin{proposition}{\cite{Mar98}} \label{pr:GTAodd}
A tetravalent graph $\G$ admits a half-arc-transitive group of automorphisms relative to which it is tightly attached of odd radius $r$ if and only if $\G \cong \X_o(m,r;q)$ for some integer $m \geq 3$ and some $q \in \ZZ^*_r$ with $q^m \equiv \pm 1 \pmod{r}$.
\end{proposition}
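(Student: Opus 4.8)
The plan is to prove the two implications separately, in both cases organising the argument around the partition of the vertex set into \emph{layers} that is forced by tight attachment. For the sufficiency I would start from an abstract $\X_o(m,r;q)$ and orient each edge $u_i^j\sim u_{i+1}^{j\pm q^i}$ from the smaller to the larger layer index (cyclically in $\ZZ_m$), producing out- and in-degree two at every vertex. I would then write down three automorphisms: the rotation $\rho\colon u_i^j\mapsto u_i^{j+1}$, the involution $\mu\colon u_i^j\mapsto u_i^{-j}$, and the shift $\sigma\colon u_i^j\mapsto u_{i+1}^{qj}$. The first two are automorphisms for every admissible parameter, while $\sigma$ is a bijection exactly because $q\in\ZZ_r^*$ and---this is the only point where a computation is genuinely needed---it preserves adjacency across the wrap-around step $m-1\to 0$ precisely when $q^m\equiv\pm1\pmod r$. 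The group $H=\langle\rho,\sigma,\mu\rangle$ is then vertex-transitive (one reaches $u_i^j$ from $u_0^0$ via $\sigma^i\rho^j$), edge-transitive (with $\mu$ interchanging the two out-edges at each vertex), and orientation-preserving, so $\G$ is $H$-half-arc-transitive. Tracing an alternating cycle shows that, because $r$ is odd and hence $2\in\ZZ_r^*$, it oscillates between two consecutive layers and sweeps each of them out completely; this simultaneously yields connectedness, simplicity (as $q^i\neq0$), radius $r$, and the fact that any two non-disjoint alternating cycles meet in a full layer, i.e.\ tight attachment.

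For the necessity, let $\G$ be tetravalent and $G$-half-arc-transitive, tightly attached of odd radius $r$, and fix a $G$-induced orientation. Here $a=\att_G(\G)=r$, so $\L=2$, and (\ref{eq:intersect}) shows that the attachment set containing a vertex $v$ consists of those vertices of its alternating cycle which, like $v$, are tails of both their incident arcs. Consequently the vertices of every alternating cycle split into its set of tails and its set of heads, and these are two attachment sets; since each vertex is a tail on one of its two alternating cycles and a head on the other, the attachment sets partition $V(\G)$ into layers $L_0,\dots,L_{m-1}$, each of size $r$. The orientation arranges the layers into a directed cycle, and connectedness (together with the exclusion of the degenerate cases $m\le2$) forces a single cycle of length $m\geq3$. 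As $G$ preserves the orientation it acts on the layers as a group of rotations, giving an epimorphism $G\to\ZZ_m$ whose kernel $K$ fixes every layer setwise. Since $a\geq3$, Proposition~\ref{pro:stab} gives $G_v\cong\ZZ_2$, whence $|G|=2mr$, $|K|=2r$, and $K$ is transitive on each layer with point-stabiliser $G_v$; using orientation-preservation one checks that the involution of $G_v$ acts on a layer as a reflection inverting the natural rotation, so that $K\cong D_r$.

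The heart of the proof is the coordinatisation. Edge-transitivity on the chosen arc-orbit produces an element $\rho\in K$ that shifts a fixed alternating cycle by two; it generates the rotation subgroup of $K$, and, having no fixed vertices (a fixed $w$ would force $\rho\in G_w\cong\ZZ_2$, impossible as $\rho$ has odd order $r\geq3$), it acts as a single $r$-cycle on every layer. Choosing a base vertex $b_i$ in each layer and setting $u_i^j=b_i\rho^{\,j}$, the adjacency between consecutive layers takes the form $u_i^j\sim u_{i+1}^{j+a_i},u_{i+1}^{j+a_i'}$; since $r$ is odd I may re-choose each $b_{i+1}$, solving $2t=-(a_i+a_i')$, to bring this into the symmetric shape $u_i^j\sim u_{i+1}^{j\pm c_i}$, and rescaling $\rho$ normalises $c_0=1$. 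A layer-shift $\sigma\in G$ normalises the characteristic subgroup $\langle\rho\rangle$ of $K$, so $\sigma^{-1}\rho\sigma=\rho^{q}$ for a unique $q\in\ZZ_r^*$; transporting the edge pattern by $\sigma$ then gives the recursion $c_{i+1}=qc_i$, hence $c_i=q^i$ and the adjacency $u_i^j\sim u_{i+1}^{j\pm q^i}$. Finally $\sigma^m\in K\cong D_r$ conjugates $\rho$ to $\rho^{q^m}$, and since every element of a dihedral group either centralises or inverts its rotation subgroup, $q^m\equiv\pm1\pmod r$, completing the identification $\G\cong\X_o(m,r;q)$.

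I expect the main obstacle to be precisely this coordinatisation: verifying that the canonical rotation $\rho$ acts as a single $r$-cycle on every layer, that a layer-shift normalises $\langle\rho\rangle$, and that after the base-point adjustments the individual connection constants assemble into the single clean rule with the growing power $q^i$. Throughout, the oddness of $r$ is the decisive lever---it makes $2$ invertible modulo $r$, which is what lets the alternating cycles sweep out entire layers, permits the symmetrisation $2t=-(a_i+a_i')$, and ultimately allows a single parameter $q$ to suffice, removing the extra twist parameter that is unavoidable in the even-radius construction $\X_e$.
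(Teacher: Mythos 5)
Your sufficiency half is essentially correct: $\rho$, $\mu$ and $\sigma$ are automorphisms (the condition $q^m\equiv\pm1\pmod{r}$ being needed exactly for $\sigma$ at the wrap-around), the group they generate is vertex- and edge-transitive and preserves the layer orientation, and since $r$ is odd each alternating cycle sweeps out two full consecutive layers, giving radius $r$, tight attachment and connectedness. The necessity half, however, has a genuine gap, and it sits exactly at the step you call the heart of the proof. Your symmetrisation re-chooses $b_{i+1}$ to centre the adjacency between layers $i$ and $i+1$, but this can only be done for $i=0,\ldots,m-2$: once $b_1,\ldots,b_{m-1}$ are spent, the closing adjacency from layer $m-1$ back to layer $0$ has no free base point left, and re-choosing $b_0$ would destroy the symmetry already arranged between layers $0$ and $1$. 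The $\sigma$-transport has the same defect: writing $b_i\sigma=b_{i+1}\rho^{k_i}$, comparison of two centred adjacencies gives $k_{i+1}=k_i$ and $c_{i+1}=\pm qc_i$, but transporting into and out of the closing adjacency (where $\sigma^m\in K$ enters, and $\sigma^m$ need not be trivial) yields only that it has the form $u_{m-1}^j\sim u_0^{j+t\pm q^{m-1}}$ for some $t$ with $t(q-1)\equiv 0\pmod{r}$. This $t$ is precisely the odd-radius analogue of the twist parameter in $\X_e(m,r;q,t)$, and nothing you wrote forces $t=0$: whenever $\gcd(q-1,r)>1$ the congruence $t(q-1)\equiv0\pmod{r}$ has nonzero solutions, so your argument as it stands only identifies $\G$ with a possibly twisted graph, not with $\X_o(m,r;q)$.

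The missing idea is to exploit the involution in a vertex stabiliser, available by Proposition~\ref{pro:stab} since $a=r\geq3$: the nontrivial $\iota\in G_{b_0}$ swaps the two out-neighbours of $b_0$ (by edge-transitivity), lies in $K$ but not in $\langle\rho\rangle$ (it has order $2$, while $\langle\rho\rangle$ has odd order), hence inverts $\rho$ and acts in your coordinates as $u_i^j\mapsto u_i^{e_i-j}$ with $e_0=0$. Preservation of the centred adjacencies forces $2(e_{i+1}-e_i)\equiv0\pmod{r}$, so all $e_i=0$ because $r$ is odd; preservation of the closing adjacency then forces $4t\equiv0\pmod{r}$, whence $t=0$, again because $r$ is odd. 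This, and not the solvability of $2t=-(a_i+a_i')$, is where oddness of $r$ kills the twist that is unavoidable for even radius; without some such step the identification $\G\cong\X_o(m,r;q)$ is not established. (Note also that the paper itself offers no proof of this proposition, citing~\cite{Mar98}, so your reconstruction has to stand on its own; with the stabiliser-involution argument added, it does. Two smaller points to tidy up: semiregularity of $\langle\rho\rangle$ on the layers needs every \emph{nontrivial power} of $\rho$ to be fixed-point-free, by the same odd-order argument; and the exclusion of $m\leq 2$ deserves a line, e.g.\ two distinct alternating cycles sharing $2r>a$ vertices is impossible.)
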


\begin{proposition}{\cite{Spa08}} 
\label{pr:GTAeven}
A tetravalent graph $\G$ admits a half-arc-transitive group of automorphisms relative to which it is tightly attached of even radius $r$ if and only if either $r = 2$ and $\G$ is isomorphic to a lexicographic product of a cycle with $2K_1$, or $r \geq 4$ and $\G \cong \X_e(m, r; q, t )$, where $m \geq 4$ is even and $q \in \ZZ^*_r$, $t \in \ZZ _r$ are such that $q^m =1$, $t(q-1)=0$ and $1 + q + \cdots + q^{m-1} + 2t = 0$.
\end{proposition}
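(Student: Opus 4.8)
The statement is an equivalence and I would prove the two implications separately. The ``if'' direction is a direct verification that each $\X_e(m,r;q,t)$ is tightly attached, half-arc-transitive and of even radius $r$, while the ``only if'' direction, which reconstructs an arbitrary such graph, carries the real content; for the latter I would use the coordinatization of tightly attached graphs by their alternating cycles, in the spirit of Maru\v{s}i\v{c} and \v{S}parl.

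For the ``if'' direction, fix on $\X_e(m,r;q,t)$ the orientation in which every edge points from column $i$ to column $i+1$ (indices of $u_i^j$ read modulo $m$, the wrap-around edges carrying the shift by $t$). The natural automorphisms are the column rotation $\rho\colon u_i^j\mapsto u_i^{j+1}$ and a column shift $\sigma$ that multiplies the in-column coordinate by $q$ and corrects by $t$ on the wrap-around; the three defining relations $q^m=1$, $t(q-1)=0$ and $1+q+\cdots+q^{m-1}+2t=0$ are exactly what makes $\sigma$ a well-defined automorphism that closes up after $m$ steps. Adjoining an involution $\tau$ that fixes a chosen vertex and interchanges its two out-neighbours (and its two in-neighbours) produces a group $G$ that is vertex- and edge-transitive; since every generator preserves the chosen orientation, $G$ fixes each of the two arc-orbits setwise and is therefore half-arc-transitive. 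Reading the alternating cycles off the adjacency rules, the alternating pattern confines each cycle to two consecutive columns, and since $\gcd(q,r)=1$ the successive in-column steps run through the whole column, so the cycle closes only after $2r$ vertices; hence $\rad_G(\G)=r$, and as consecutive alternating cycles share exactly one full column, $\att_G(\G)=r$, i.e.\ tight attachment.

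For the ``only if'' direction, let $\G$ be tetravalent, $G$-half-arc-transitive and tightly attached of even radius $r$. If $r=2$ then $\att_G(\G)=2$, and a short argument on the length-$4$ alternating cycles (any two intersecting ones meeting in a full attachment set of two vertices) shows that the two vertices of each attachment set are twins, which forces $\G\cong C_m[2K_1]$. If $r\ge 4$ then $\att_G(\G)=r\ge 3$, so Proposition~\ref{pro:stab} gives $G_v\cong\ZZ_2$. I would then coordinatize: the attachment sets form a $G$-invariant partition of $V(\G)$ into $m=|V(\G)|/r$ blocks of size $r$ arranged cyclically, with each alternating cycle the union $V_i\cup V_{i+1}$ of two consecutive blocks; exploiting the cyclic arrangement of the blocks and the action of $G$ I would introduce coordinates $u_i^j$ with $i\in\ZZ_m$, $j\in\ZZ_r$. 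The orientation together with the reflections supplied by the involutions in the stabilizers then pins down the adjacency rule between consecutive blocks up to one multiplier, which I would identify with $q$; demanding that the labelling be consistent all the way around the cycle of blocks yields $q^m=1$, the freedom in aligning labels across the wrap-around edge is recorded by $t$, and compatibility of the two reflection directions forces $t(q-1)=0$ and $1+q+\cdots+q^{m-1}+2t=0$. The condition that $m$ be even I expect to come from the parity of $r$ together with the requirement that the induced action on the cycle of blocks preserve (rather than reverse) the orientation, reversal being precisely what would make $\G$ arc-transitive.

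The main obstacle is the reconstruction for $r\ge 4$: making the coordinatization canonical and then reading off exactly the three arithmetic relations. The two delicate points are (i) the compatibility of the two reflection directions coming from the $\ZZ_2$-stabilizers acting on the two alternating cycles through a vertex, which is what produces the twist $t$ and the relation $t(q-1)=0$ specific to the even case, and (ii) the closure around all $m$ blocks, which forces $q^m=1$ rather than the $q^m\equiv\pm1$ of the odd-radius case in Proposition~\ref{pr:GTAodd}. The loss of the $-1$ possibility, together with the appearance of $t$, is the hallmark of the even radius and is the crux distinguishing this proposition from its odd-radius counterpart.
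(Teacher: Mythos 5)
First, note that the paper offers no proof for you to match: Proposition~\ref{pr:GTAeven} is a quoted classification result, extracted from Theorem~1.2 of~\cite{Spa08} (as the paragraph preceding Propositions~\ref{pr:GTAodd} and~\ref{pr:GTAeven} states), and the genuine proof occupies a substantial part of that paper, building on~\cite{MarPra99}. Your outline does follow the same broad strategy as that literature: direct verification for the graphs $\X_e(m,r;q,t)$, the twin argument giving a graph $C_m[2K_1]$ when $r=2$, and, for $r\geq 4$, an appeal to Proposition~\ref{pro:stab} followed by a coordinatization of the attachment sets by $\ZZ_m\times\ZZ_r$. One smaller inaccuracy in your ``if'' direction: the relations $q^m=1$ and $t(q-1)=0$ are what make the column shift $\sigma$ a well-defined automorphism, while the third relation $1+q+\cdots+q^{m-1}+2t=0$ is not needed for $\sigma$ at all---it is needed precisely for the existence of the involution $\tau$ that merges the two edge-orbits of $\langle\rho,\sigma\rangle$. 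This matters for the converse as well, since each relation must be traced to its true source: the first two come from consistency of the labelling, the third from the vertex-stabilizer involution.

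The serious gap is your proposed explanation of why $m$ must be even. The requirement that $G$ act on the cycle of blocks preserving its orientation is automatic and therefore constrains nothing: in a tightly attached graph every vertex of an alternating cycle is the head, or else the tail, of both of its cycle-edges, so all edges between consecutive columns $V_i$ and $V_{i+1}$ are oriented coherently, the quotient $m$-cycle carries a circular orientation, and every element of $G$ preserves it---for every $m$ and every $r$. Indeed, for odd $r$ the same statement holds and yet $m$ may be odd (Proposition~\ref{pr:GTAodd}), and for $r=2$ the graphs $C_m[2K_1]$ with $m$ odd do occur; so this mechanism cannot produce the parity of $m$. What actually forces $m$ to be even when $r\geq 4$ is even is a fixed-point analysis of the stabilizer involution $\gamma\in G_v$: since $G$ is then regular on edges, $\gamma$ swaps the two out-neighbours and the two in-neighbours of $v$, hence reverses the cyclic order of each column and acts as a reflection on every alternating cycle; because $r$ is even, the two fixed vertices of such a reflection lie in a single column, and propagating around the cycle of columns one finds that $\gamma$ has two fixed vertices in every second column and none in the columns in between. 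If $m$ were odd, the closing alternating cycle on $V_{m-1}\cup V_0$ would contain four fixed vertices of a reflection, a contradiction. This analysis---which is also what generates the twist $t$ and the relation $1+q+\cdots+q^{m-1}+2t=0$, and which degenerates exactly when $r=2$ because Proposition~\ref{pro:stab} no longer applies---is the technical heart of the even-radius classification; without it your reconstruction cannot reach the stated form.
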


Before we determine the alternating jump parameter of the graphs $\G$ from the above two propositions we first fix some notation. We first point out that the action of the corresponding half-arc-transitive group $G$ of automorphisms, both in the graphs $\X_o(m,r;q)$ and the graphs $\X_e(m,r;q,t)$, is such that the corresponding $G$-attachment sets are $\G_i = \{u_i^j \in V(\G) \colon j \in \ZZ_r\}$, where $i \in \ZZ_m$ (see~\cite{Mar98, Spa08} for details). In particular, in one of the two $G$-induced orientations of the edges of $\G$ all edges are oriented from $\G_i$ to $\G_{i+1}$ for all $i \in \ZZ_m$. In the proof of the following result we always choose this orientation. The notation $\min_r\{\pm q, \pm q^{-1}\}$ in the statement of the following proposition stands for the minimal integer from the set $\{0,1,\ldots , r-1\}$, which is congruent to one of $q, -q, q^{-1}, -q^{-1}$ modulo $r$, where $q^{-1}$ is the inverse of $q$ modulo $r$.

\begin{proposition}
\label{pro:GTA}
Let $\G$ be a tetravalent graph admitting a half-arc-transitive group $G$ of automorphisms relative to which it is tightly attached with radius $r \geq 3$. Then $\jum_G(\G) = \min_r\{\pm q, \pm q^{-1}\}$, where $\G \cong \X_o(m, r; q)$ or $\G \cong \X_e(m, r; q, t )$, depending on whether $r$ is odd or even, respectively.
\end{proposition}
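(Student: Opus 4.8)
The plan is to work directly with the explicit vertex labels of the models $\X_o(m,r;q)$ and $\X_e(m,r;q,t)$ and to follow the two $G$-alternating cycles through a fixed vertex. Since $\G$ is tightly $G$-attached we have $a=\att_G(\G)=r$, so $\L=2r/a=2$, and by the remarks preceding the statement I would fix the $G$-induced orientation in which every edge points from $\G_i$ to $\G_{i+1}$. Taking $v=u^0_0$, both out-arcs at $v$ lead into $\G_1$ and both in-arcs come from $\G_{m-1}$; hence the alternating cycle $C$ on which $v$ is a tail is the one meeting $\G_0\cup\G_1$, while the alternating cycle $C'$ on which $v$ is a head is the one meeting $\G_{m-1}\cup\G_0$. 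In particular $C\cap C'=\G_0$, in agreement with (\ref{eq:intersect}) for $\L=2$.

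First I would determine the labelling of $C$. Following the alternating pattern from $v$ one checks that $u_{2k}=u^{2k}_0$ in the odd case and $u_{2k}=u^{k}_0$ in the even case, the difference coming from the symmetric $\pm q^i$ adjacency in $\X_o$ versus the $0,\,q^i$ adjacency in $\X_e$; in particular $\{u_2,u_{-2}\}=\{u^2_0,u^{-2}_0\}$ respectively $\{u^1_0,u^{-1}_0\}$. Next I would compute a single transfer step along $C'$: starting at $v_0=u^0_0$, passing through a neighbour $v_1\in\G_{m-1}$ and returning to $v_2\in\G_0$. The neighbours of $u^0_0$ in $\G_{m-1}$ are $u^{\mp q^{m-1}}_{m-1}$ in the odd case and $u^{-t}_{m-1},u^{-q^{m-1}-t}_{m-1}$ in the even case; chasing the second edge at $v_1$ yields $v_2=u^{-2q^{m-1}}_0$ (odd) and $v_2=u^{q^{m-1}}_0$ (even). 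The key point in the even case is that the shift $t$ cancels, so that the transfer step is governed by $q^{m-1}$ alone.

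Combining these two computations, $v_2$ equals $u_{\pm 2q^{m-1}}$ in both cases, so from the definition of $q_h$ I would read off $q_h=\min_r\{\pm q^{m-1}\}$ (and, via Lemma~\ref{le:mult}, the general formula $v_{2n}=u_{\pm 2n q^{m-1}}$ should one prefer to compute $q_t$ directly). The final step is to invoke the defining relation of the model: $q^m\equiv\pm 1\pmod r$ in the odd case and $q^m=1$ in the even case both give $q^{m-1}\equiv\pm q^{-1}\pmod r$, whence $q_h=\min_r\{\pm q^{-1}\}$. By Lemma~\ref{le:inverse} then $q_t=\min_r\{\pm q\}$ (this also follows from the symmetric computation), and therefore $\jum_G(\G)=\min(q_t,q_h)=\min_r\{\pm q,\pm q^{-1}\}$.

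The routine part is the index bookkeeping; the only genuinely delicate points, which I would verify with care, are the correct identification of which alternating cycle carries $v$ as a tail and which as a head, the single transfer step along $C'$ through $\G_{m-1}$ (in particular the cancellation of $t$ in the even case), and the clean conversion $q^{m-1}\equiv\pm q^{-1}$ furnished by the defining relation $q^m\equiv\pm 1$. Once these are in place the conclusion is immediate.
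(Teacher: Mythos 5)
Your proof is correct and follows essentially the same route as the paper's: chase the two explicit $G$-alternating cycles through a fixed vertex of the model $\X_o(m,r;q)$ or $\X_e(m,r;q,t)$ (with the attachment sets $\G_i$ and the orientation from $\G_i$ to $\G_{i+1}$), read off one of $q_t,q_h$ from the labels, and obtain the other via Lemma~\ref{le:inverse}. The only difference is your choice of base vertex $u_0^0$, which forces the transfer step through the seam between $\G_{m-1}$ and $\G_0$ (hence the cancellation of $t$ and the conversion $q^{m-1}\equiv\pm q^{-1}$ via $q^m\equiv\pm 1$), whereas the paper takes $v=u_1^0$ so that both cycles stay inside $\G_0\cup\G_1\cup\G_2$ and $q_t=\min_r\{\pm q\}$ falls out directly, with no need for the defining relation.
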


\begin{proof} 
Recall that, since the $G$-attachment sets are blocks of imprimitivity for the action of $G$ on $\G$, the radius $r$ divides $|V(\G)|$. Let $m$ be such that $|V(G)|=mr$ and note that, by \cite[Proposition~2.4]{Mar98}, we have $m\geq 3$. We separate the proof depending on the parity of $r$. 

Suppose first that $r$ is odd. In this case Proposition~\ref{pr:GTAodd} implies that $\G \cong \X_o(m,r;q)$ for some $q \in \ZZ^*_r$ such that $q^m = \pm 1$. Choose the orientation of $\G$ described in the paragraph preceding Proposition~\ref{pro:GTA} and let $v = u_1^0$. In order to determine $\jum_G(\G)$ we find the values $q_t(v)$ and $q_h(v)$ (recall that the parameters $q_t$ and $q_h$ do not depend on the choice of the vertex $v$). By definition of the graph $\X_o(m,r;q)$ the two $G$-alternating cycles containing $v$ are 
$$
C = (u_{1}^{0},u_{2}^{q}, u_{1}^{2q},u_{2}^{3q},\ldots, u_1^{r-2q}, u_{2}^{r-q})
	\mbox{ and }
C' = (u_{1}^{0},u_{0}^{1}, u_{1}^{2},u_{0}^{3},\ldots,u_{1}^{r-2}, u_{0}^{r-1}).
$$ 
Since $r$ is odd and $q$ is coprime to $r$, the corresponding $G$-attachment set $C \cap C'$ is then 
$$
C \cap C' = \{u_1^{2 j q} \colon j \in \ZZ_r\} \cap \{u_{1}^{2 j} \colon j \in \ZZ_r\} = \{u_1^j \colon j \in \ZZ_r\}.
$$ 
It follows that $q_t(v)=\min\{s \colon u_{1}^{2s}\in \{u_1^{2q}, u_1^{-2q}\}\}$, which implies $q_t(v) = \min \{q,-q\}$. Therefore, Lemma~\ref{le:inverse} implies $q_h(v)\in\{q^{-1},-q^{-1}\}$, and so $\jum_G(\G) = \min_r\{\pm q, \pm q^{-1}\}$, as claimed.

Suppose now that $r$ is even. Since $r \geq 3$ Proposition~\ref{pr:GTAeven} implies that $\G \cong \X_e(m, r; q, t )$ for some $q \in \ZZ^*_r$ and $t \in \ZZ _r$ such that $q^m =1$, $t(q-1)=0$ and $1 + q + \cdots + q^{ m-1} + 2t = 0$.  Then, choosing the orientation of $\G$ described in the paragraph preceding Proposition~\ref{pro:GTA}, the two $G$-alternating cycles containing the vertex $v = u_{1}^{0}$ are 
$$
C = (u_{1}^{0},u_{2}^{q}, u_{1}^{q},u_{2}^{2q},u_1^{2q}\ldots, u_1^{r-q}, u_{2}^{0})
	\mbox{ and }
C'=(u_{1}^{0},u_{0}^{0}, u_{1}^{1},u_{0}^{1},\ldots,u_{1}^{r-1}, u_{0}^{r-1}).
$$
A similar argument as in the case of $r$ being odd proves that $\jum_G(\G) = \min_r\{\pm q, \pm q^{-1}\}$.
\end{proof}

By \cite[Proposition 4.1]{Mar98} for any $m \geq 3$, any odd $r \geq 3$ and any $q \in \ZZ_r$ such that $q^m = \pm 1$ the graph $\X_o(m,r;q)$ is isomorphic to each of the graphs $\X_o(m,r;-q)$, $\X_o(m,r;q^{-1})$ and $\X_o(m,r;-q^{-1})$. Similarly, \cite[Proposition~3.9]{Spa08} shows that for any pair of even integers $m,r \geq 4$ and any $q, t \in \ZZ_r$ with $q^m = 1$, $t(q-1) = 0$ and $1+q+\cdots + q^{m-1} + 2t=0$ the graph $\X_e(m,r;q,t)$ is isomorphic to each of the graphs $\X_e(m,r;q^{-1},t)$, $\X_e(m,r;-q, t + q + q^3 + \cdots + q^{m-1})$ and $\X_e(m,r;-q^{-1}, t + q + q^3 + \cdots + q^{m-1})$. We thus have the following corollary.

\begin{corollary}
\label{cor:GTA}
Let $\G$ be a tetravalent graph admitting a half-arc-transitive group $G$ of automorphisms relative to which it is tightly attached with radius $r \geq 3$. Let $m$ be such an integer that the order of $\G$ is $mr$ and let $q = \jum_G(\G)$. Then, one of (i) and (ii) below holds, depending on whether $r$ is odd or even, respectively:
\begin{itemize}
\item[(i)] $\G \cong \X_o(m,r;q)$.
\item[(ii)] $\G \cong \X_e(m,r;q,t)$, where $t \in \ZZ_r$ is one of the two solutions of the equation $1+q+\cdots + q^{m-1} + 2t = 0$,
\end{itemize}
\end{corollary}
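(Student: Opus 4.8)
The plan is to combine Proposition~\ref{pro:GTA} with the isomorphism results quoted immediately before the corollary statement. Proposition~\ref{pro:GTA} already tells us that $\jum_G(\G) = \min_r\{\pm q, \pm q^{-1}\}$ whenever $\G \cong \X_o(m,r;q)$ or $\G \cong \X_e(m,r;q,t)$ (according to the parity of $r$), so the defining parameter appearing in the construction need not itself equal the alternating jump. The whole point of the corollary is that we may nevertheless \emph{choose} the representative in the isomorphism class so that the second defining parameter is exactly $\jum_G(\G)$, and the quoted isomorphism propositions are precisely what licenses this replacement.

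First I would handle the odd case. Fix the isomorphism $\G \cong \X_o(m,r;q_0)$ guaranteed by Proposition~\ref{pr:GTAodd} for some $q_0 \in \ZZ_r^*$ with $q_0^m \equiv \pm 1$, and set $q = \jum_G(\G)$. By Proposition~\ref{pro:GTA} we have $q = \min_r\{\pm q_0, \pm q_0^{-1}\}$, so $q$ is congruent modulo $r$ to one of $q_0, -q_0, q_0^{-1}, -q_0^{-1}$. The cited \cite[Proposition 4.1]{Mar98} states that $\X_o(m,r;q_0)$ is isomorphic to each of $\X_o(m,r;-q_0)$, $\X_o(m,r;q_0^{-1})$ and $\X_o(m,r;-q_0^{-1})$; chaining this isomorphism with our fixed one yields $\G \cong \X_o(m,r;q)$, which is part~(i). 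I would briefly note that $q$ inherits the requirement $q^m \equiv \pm 1 \pmod r$, since inversion and negation preserve this condition, so the target graph in the construction is well defined.

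The even case runs along the same lines but requires a little more care because the construction has a second parameter $t$. Starting from $\G \cong \X_e(m,r;q_0,t_0)$ with $m$ even, I would again use Proposition~\ref{pro:GTA} to write $q = \jum_G(\G) \equiv \pm q_0^{\pm 1} \pmod r$, and then invoke \cite[Proposition~3.9]{Spa08}, which provides isomorphisms of $\X_e(m,r;q_0,t_0)$ with graphs whose first parameter is $q_0^{-1}$, $-q_0$ and $-q_0^{-1}$ (with correspondingly adjusted second parameters $t_0$ or $t_0 + q_0 + q_0^3 + \cdots + q_0^{m-1}$). In each case the resulting graph has first parameter $q$ and some admissible $t \in \ZZ_r$, giving $\G \cong \X_e(m,r;q,t)$; this is part~(ii). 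The only genuinely substantive point to check is that $t$ is one of the \emph{two} solutions of $1 + q + \cdots + q^{m-1} + 2t = 0$: this follows because $\X_e(m,r;q,t)$ is defined only when $q,t$ satisfy the defining relations of Construction~2, so the parameter $t$ produced by the isomorphism automatically solves that linear equation, and since $r$ is even the equation $2t = -(1+q+\cdots+q^{m-1})$ has exactly two solutions in $\ZZ_r$ (it is solvable precisely because a valid $t$ exists).

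I expect the main obstacle to be purely bookkeeping rather than conceptual: one must verify that the parameter constraints ($q^m \equiv \pm 1$ in the odd case, and $q^m=1$ together with $t(q-1)=0$ in the even case) are genuinely preserved under the substitutions $q_0 \mapsto \pm q_0^{\pm 1}$ supplied by the cited isomorphism results. These are exactly the hypotheses under which \cite[Proposition 4.1]{Mar98} and \cite[Proposition~3.9]{Spa08} are stated, so the isomorphisms already encode the needed compatibility; the corollary is therefore an immediate formal consequence of Proposition~\ref{pro:GTA} and those two cited propositions, and the proof can be made very short.
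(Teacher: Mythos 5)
Your proposal is correct and follows exactly the paper's own route: the authors state the corollary immediately after quoting \cite[Proposition 4.1]{Mar98} and \cite[Proposition~3.9]{Spa08}, precisely so that Proposition~\ref{pro:GTA} combined with those isomorphisms (replacing the defining parameter by whichever of $\pm q^{\pm 1}$ equals the alternating jump) yields the result. Your extra bookkeeping remarks --- that the conditions $q^m \equiv \pm 1$, $q^m = 1$, $t(q-1)=0$ are preserved under these substitutions, and that $1+q+\cdots+q^{m-1}+2t=0$ has exactly two solutions $t \in \ZZ_r$ when $r$ is even --- are exactly the details the paper leaves implicit.
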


The above corollary shows that, at least for tetravalent graphs admitting a half-arc-transitive group of automorphisms relative to which they are tightly attached, the order, together with the corresponding radius and alternating jump parameter (almost) completely determine the graph. More precisely, they determine it completely in the case of odd radius while in the case of even radius at most two such graphs can exist.

We want to point out however, that a given tetravalent graph $\G$ may admit more than one half-arc-transitive group of automorphisms (even such that it is tightly attached with respect to more than one of them) and that the corresponding radii (nor alternating jump parameters) need not be the same. For instance in~\cite[Example 2.1]{MarPra99} (see also \cite{MarNed02}) it was shown that the well-known wreath graphs $C_{2r}[2K_1]$, where $r\geq 3$, admit two different half-arc-transitive groups of automorphisms relative to which they are tightly attached. For one of them the corresponding radius is $2$ while for the other one it is $r$. There are other such examples that can be found in~\cite{MarPra99} but one can also find others by going through the census from~\cite{PotSpiVer15}. For instance, the graph GHAT$[294;1]$ from the census admits three different half-arc-transitive groups of automorphisms. It is loosely attached with radii $3$ and $7$ for two of them and is antipodally attached with radius $3$ for the third one. Of course, if we restrict to half-arc-transitive graphs (where the full automorphism group is half-arc-transitive), these phenomena cannot occur. 

\section{The graph of alternating cycles and the quotient graph}
\label{sec:kernel}

In this section we use the alternating jump parameter to determine the kernel of the action of $G$ on the graph of alternating cycles $\Alt_G(\G)$ from~\cite{PotSpa17} and the quotient graph $\G_{\B}$ from~\cite{MarPra99}, where $\G$ is a tetravalent graph admitting a half-arc-transitive group of automorphisms $G$ (see Theorem~\ref{the:kernel} and Theorem~\ref{the:allkernels}). 

We first recall the definition of the graph of alternating cycles $\Alt_G(\G)$ from~\cite{PotSpa17}. Let $\G$ be a tetravalent graph admitting a half-arc-transitive group of automorphisms $G$. The vertex set of $\Alt_G(\G)$ is then the set of all $G$-alternating cycles of $\G$ with two such cycles being adjacent whenever they have a non-empty intersection. Of course, $G$ has a natural action on the graph $\Alt_G(\G)$. In fact, it was proved in \cite[Proposition 4]{PotSpa17} that the induced action is vertex- and edge-transitive and is half-arc-transitive if and only if $\att_G(\G)$ divides $\rad_G(\G)$. One of the aims of this section is to determine the kernel $K_G(\Alt_G(\G))$ of this action. Observe that $K_G(\Alt_G(\G))$ is the normal subgroup of $G$ consisting of all elements fixing each $G$-alternating cycle (throughout the rest of paper we say that an automorphism of $\G$ {\em fixes} a cycle if it maps the cycle as a subgraph to itself). The following lemma gives a sufficient condition for an element of $G$ to be contained in $K_G(\Alt_G(\G))$.

\begin{lemma} 
\label{le:rotation}
Let $\G$ be a tetravalent $G$-half-arc-transitive graph for some $G \leq \aut(\G)$ such that $3 \leq a < r$, where $r = \rad_G(\G)$ and $a = \att_G(\G)$. Suppose $\gamma \in G$ fixes some $G$-alternating cycle $C$, as well as all the $G$-attachment sets containing vertices of $C$. Then $\gamma \in K_G(\Alt_G(\G))$.
\end{lemma}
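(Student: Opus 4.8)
The plan is to exploit the connectivity of the graph $\Alt_G(\G)$ and to propagate the fixing property along it by induction on the distance from $C$. Two numerical facts will drive everything: since $a\geq 3$, Proposition~\ref{pro:stab} gives $G_v\cong\ZZ_2$, and since $a<r$ we have $\L=2r/a\geq 3$. The first structural observation I would record is that any $\delta\in G$ fixing a $G$-alternating cycle $D$ restricts to an \emph{orientation-preserving} automorphism of the cycle $D$, because $\delta$ preserves the $G$-induced orientation $\O_G$. Writing $D=(w_0,w_1,\dots,w_{2r-1})$ with the $w_{2i}$ being the tails, such an automorphism is either an even rotation $w_i\mapsto w_{i+2k}$ or a reflection $w_i\mapsto w_{c-i}$ with $c$ even; the odd rotations and the edge-centred reflections are ruled out since they reverse $\O_G$. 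I would also recall from (\ref{eq:intersect}) that the $G$-attachment sets meeting $D$ are exactly the $\L$ cosets of the subgroup $\langle\L\rangle\leq\ZZ_{2r}$ read off along $D$, and that a rotation by $2k$ (resp. a reflection with parameter $c$) permutes these cosets by $s\mapsto s+2k$ (resp. $s\mapsto c-s$) modulo $\L$.

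The first genuine step is to pin down $\gamma|_C$. Since $\gamma$ fixes every one of the $\L\geq 3$ attachment sets meeting $C$, it fixes every coset of $\langle\L\rangle$ along $C$; but a reflection $s\mapsto c-s$ can fix all $\L$ cosets only if $\L\mid 2$, which is impossible. Hence $\gamma|_C$ is an even rotation by some $2k$ with $\L\mid 2k$. I would then prove the following invariant by induction along a breadth-first search in $\Alt_G(\G)$: for every $G$-alternating cycle $D$, the element $\gamma$ fixes $D$, restricts to a rotation on $D$, and fixes all attachment sets meeting $D$. The base case is $C$. For the inductive step take a neighbour $D'$ of a cycle $D$ satisfying the invariant, meeting $D$ in the attachment set $A=D\cap D'$. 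First, $\gamma$ fixes $D'$: since $\gamma$ fixes $A$ setwise, for $x\in A$ the image $\gamma(D')$ is an alternating cycle through $\gamma(x)\in A$, the only two alternating cycles through $\gamma(x)$ are $D$ and $D'$, and $\gamma(D')=\gamma(D)=D$ would force $D'=D$; hence $\gamma(D')=D'$.

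The heart of the argument, and the step I expect to be the main obstacle, is showing that $\gamma|_{D'}$ is again a rotation and not a reflection, for only then does fixing the single attachment set $A$ force fixing all $\L$ of them. Here I would transport the action of $\gamma$ on $A$ from the $D$-indexing to the $D'$-indexing using Lemma~\ref{le:mult}: since $\gamma|_D$ is a rotation, its restriction to $A$ is a translation $i\mapsto i+j$ of $\ZZ_a$, and Lemma~\ref{le:mult} together with the coprimality from Lemma~\ref{le:inverse} shows that in the $D'$-indexing this restriction is again a translation, by $j\,q_t$. A reflection of $D'$, on the other hand, would restrict on the coset $A$ to a reflection $i\mapsto m-i$ of $\ZZ_a$, and for $a\geq 3$ a nontrivial translation is never equal to a reflection; if instead $j\equiv 0\pmod a$, then $\gamma$ fixes all $a\geq 3$ points of $A$, so $\gamma|_{D'}$ fixes at least three points and is the identity. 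In either case $\gamma|_{D'}$ is a rotation, and a rotation of $D'$ fixing one coset of $\langle\L\rangle$ fixes them all; thus $\gamma$ fixes every attachment set meeting $D'$, completing the inductive step. Since $\G$ is connected, so is $\Alt_G(\G)$, and the invariant therefore holds for every $G$-alternating cycle. In particular $\gamma$ fixes all of them, so $\gamma\in K_G(\Alt_G(\G))$.
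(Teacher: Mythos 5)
Your proof is correct and takes essentially the same route as the paper's: pin down $\gamma|_C$ as a rotation by a multiple of $\L$ using the $\L \geq 3$ attachment sets, transport this action to a neighbouring alternating cycle via Lemma~\ref{le:mult}, show the restriction there is again such a rotation so that the hypotheses propagate, and conclude by connectedness of $\Alt_G(\G)$. The only differences are cosmetic: you make the rotation/reflection dichotomy and the induction explicit (including the degenerate case where $\gamma$ fixes $A$ pointwise), whereas the paper achieves the same by computing the images of the two vertices $v_0$ and $v_\L$ and invoking $\L < r$.
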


\begin{proof}
Denote $C=(u_0,u_1, \ldots, u_{2r-1})$, fix one of the two $G$-induced orientations of the edges of $\G$ and let $Q_G(\G) = \{q_t,q_h\}$. Since $\gamma$ fixes $C$, as well as all the $G$-attachment sets containing vertices of $C$, there exists a unique $0 \leq k < a$ such that $u_0\gamma = u_{k\L}$, where $\L = 2r/a$ (recall that the $G$-attachment sets are of the form given in (\ref{eq:intersect})). In fact, by exchanging the roles of $u_j$ and $u_{-j}$ for each $j$ if necessary, we can assume $k \leq a/2$. Since $u_1$ is a vertex of the induced cycle $C$ (see \cite[Proposition 2.4]{Mar98}) and it is adjacent to $u_0$, the image $u_1\gamma$ is one of the vertices $u_{k\L + 1}$ and $u_{k\L - 1}$. Moreover, since $a < r$, we have $\L \geq 3$, and so the $G$-attachment set containing $u_1$, namely $\{u_{1+i\L} \colon 0\leq i<a\}$, does not contain $u_{k\L - 1}$. It follows that $u_1\gamma = u_{k\L + 1}$, and so the restriction of the action of $\gamma$ to $C$ is the $k\L$-step rotation such that $u_j\gamma = u_{j + k\L}$ for all $0 \leq j \leq 2r-1$.

Let $C' = (v_0,v_1, \ldots, v_{2r-1})$ be one of the $G$-alternating cycles having a non-empty intersection with $C$. Without loss of generality assume $v_0 = u_0$ and $u_\L = v_{q\L}$, where $q$ is one of $q_t$ and $q_h$, depending on whether $u_0$ is the tail of the two arcs of $C$ incident to it or not, respectively. By Lemma~\ref{le:mult} we have $u_{i\L} = v_{iq\L}$ for all $0 \leq i < a$, and so $v_0\gamma = u_0\gamma = u_{k\L} = v_{kq\L}$. Recall that $v_\L = u_{q^{-1}\L}$, where $q^{-1}$ is the inverse of $q$ modulo $a$. Thus 
$$
	v_\L\gamma = u_{q^{-1}\L}\gamma = u_{q^{-1}\L + k\L} = u_{(q^{-1} + k)\L} = v_{(q^{-1} + k)q\L} = v_{\L + kq\L}.
$$  
Since $\L < r$ and $\gamma$ fixes $C'$ (since it fixes $C$ and the attachment set $C \cap C'$), it is now clear that the restriction of the action of $\gamma$ to $C'$ is the $kq\L$-step rotation such that $v_j\gamma = v_{j+kq\L}$ for all $0 \leq j \leq 2r-1$. It follows that $\gamma$ fixes all of the $G$-attachment sets containing vertices of $C'$, and so the assumptions of the lemma hold for $C'$ and $\gamma$ as well. By connectedness $\gamma$ fixes each $G$-alternating cycle of $\G$, and so $\gamma \in K_G(\Alt_G(\G))$.
\end{proof}

We can now (almost) completely determine the kernel $K_G(\Alt_G(\G))$ for a tetravalent graph $\G$ admitting a half-arc-transitive group of automorphisms $G \leq \aut(\G)$. 

\begin{theorem}
\label{the:kernel}
Let $\G$ be a tetravalent $G$-half-arc-transitive graph for some $G \leq \aut(\G)$ and let $r = \rad_G(\G)$ and $a = \att_G(\G)$. Let $K = K_G(\Alt_G(\G))$ be the kernel of the action of $G$ on the graph $\Alt_{G}(\G)$ of $G$-alternating cycles of $\G$. Then one of the following holds:
\begin{enumerate}\itemsep = 0pt
\item[(i)] $a = 2r$ and $K = D_r$;
\item[(ii)] $a = r = 2$, in which case $\G$ is the wreath graph $C_n[2K_1]$ for some integer $n$, and $K$ is isomorphic to a subgroup of the elementary abelian $2$-group of order $2^n$; 
\item[(iii)] $a = r > 2$ and $K$ is isomorphic to the dihedral group of order $2a$; 
\item[(iv)] $a < r$ with $a\mid r$ and $K$ is isomorphic to the cyclic group of order $a$, unless possibly if $a = 2$, in which case the kernel $K$ can be trivial; 
\item[(v)] $a < r$ with $a\nmid r$ and $K$ is isomorphic to the cyclic group of order $a/2$.
\end{enumerate}
\end{theorem}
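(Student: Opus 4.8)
The plan is to classify the kernel $K = K_G(\Alt_G(\G))$ by separating into cases according to the relationship between $a$ and $r$, exactly as the five items (i)--(v) do, and to leverage Lemma~\ref{le:rotation} together with Proposition~\ref{pro:stab}. The underlying intuition is that an element of $K$ must fix every $G$-alternating cycle as a subgraph, and on each such cycle it acts either as a rotation or a reflection; the global structure of $K$ is then dictated by how these local actions propagate across intersecting cycles via the alternating jump. First I would dispose of the degenerate cases. When $a = 2r$ there is a single $G$-alternating cycle (it passes through every vertex $r$ times in the attachment sense), so $K$ consists of all automorphisms fixing that one cycle; these form the dihedral group $D_r$ acting on it, giving (i). When $r = 2$ the graph is a wreath graph $C_n[2K_1]$ by Proposition~\ref{pr:GTAeven}, and here the alternating cycles are $4$-cycles whose pointwise swaps of the doubled vertices generate an elementary abelian $2$-group, yielding (ii).

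For the main cases I would fix one $G$-induced orientation, write $Q_G(\G) = \{q_t, q_h\}$, and study the action of an element $\gamma \in K$ on a fixed cycle $C = (u_0, \dots, u_{2r-1})$. Since $\gamma$ fixes $C$ as a subgraph, its restriction to $C$ is a rotation or a reflection of the $2r$-cycle. The orientation is $G$-invariant, so $\gamma$ preserves which arcs point which way; a reflection of $C$ would reverse the orientation along $C$ unless it fixes the tail/head pattern, and since consecutive edges alternate, the restriction of $\gamma$ to $C$ must be an \emph{even-step rotation} $u_j \mapsto u_{j + k\L}$ for some $0 \le k < a$ (this is essentially the rotation computed in the proof of Lemma~\ref{le:rotation}). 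The key point for cases (iv) and (v) is that, because such a $\gamma$ automatically fixes all attachment sets meeting $C$, Lemma~\ref{le:rotation} forces $\gamma \in K$ to act on a neighboring cycle $C'$ by the rotation $v_j \mapsto v_{j + kq\L}$, where $q \in \{q_t, q_h\}$. This shows $K$ is abelian and that the map sending $\gamma$ to its rotation parameter $k \in \ZZ_a$ is an injective homomorphism, so $K$ embeds in $\ZZ_a$ (cyclic).

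To pin down the exact order I would analyze the consistency condition that the rotation must satisfy when traced around a cycle of intersecting alternating cycles back to $C$: the composite multiplier is a product of the $q$'s and their inverses, and by Lemma~\ref{le:inverse} we have $q_t q_h \equiv \pm 1 \pmod a$. When $a \mid r$ (case (iv)) the parameter $\L = 2r/a$ is even, the attachment sets and rotations line up so that every $k \in \ZZ_a$ is realizable by a genuine automorphism (the existence coming from $G$ itself acting by translation along cycles, via the stabilizer structure of Proposition~\ref{pro:stab}), giving $K \cong \ZZ_a$; the exceptional antipodal subcase $a = 2$ is where the relevant element may coincide with the identity on all cycles, permitting $K$ trivial. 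When $a \nmid r$ (case (v)) the parity of $\L$ obstructs exactly the rotations by an odd multiple of $\L$ from being consistent, halving the realizable set to the even residues and yielding $K \cong \ZZ_{a/2}$. Case (iii), where $a = r > 2$, is handled by noting that $\L = 2$, so the attachment sets are antipodal pairs along cycles of length $2r = 2a$; here reflections become available because fixing a cycle of length $2a$ with $\L = 2$ no longer forces rotation, and one obtains the full dihedral group of order $2a$.

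I expect the main obstacle to be the \textbf{existence/realizability} half of cases (iv) and (v) — that is, proving the rotation homomorphism is \emph{onto} the claimed cyclic group (or onto its index-$2$ subgroup), rather than merely injective. Injectivity and the cyclic embedding follow cleanly from Lemma~\ref{le:rotation}, but to produce actual elements of $K$ realizing each admissible rotation parameter one must exhibit automorphisms, which requires a careful argument using the transitivity of $G$ and the exact $\ZZ_2$ vertex-stabilizers from Proposition~\ref{pro:stab} to build the global fixing element and then verify, via the $\L$-parity and the $q_t q_h \equiv \pm 1$ relation of Lemma~\ref{le:inverse}, that the propagation around \emph{all} closed chains of intersecting cycles is consistent. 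Distinguishing precisely when $a \mid r$ forces the full $\ZZ_a$ versus when $a \nmid r$ forces only $\ZZ_{a/2}$ — and isolating the genuinely exceptional antipodal case $a=2$ — is the delicate combinatorial heart of the proof.
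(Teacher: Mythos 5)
Your case decomposition matches the paper's, and your ``upper bound'' argument is sound: every element of $K$ restricts to a rotation by a multiple of $\L = 2r/a$ on each alternating cycle, the rotation parameters propagate via Lemma~\ref{le:rotation}, and $K$ therefore embeds in a cyclic group. But you explicitly leave the surjectivity/existence half of cases (iv) and (v) unproven, and that is not a routine verification to be filled in later -- it is the crux, and your proposed route for it (constructing an automorphism from local rotation data and checking consistency of the composite multipliers around all closed chains of intersecting cycles) is both uncarried-out and harder than necessary. The idea you are missing is that the generator of $K$ need not be \emph{built} at all: it already lives in $G$. When $a \geq 3$, Proposition~\ref{pro:stab} gives $|G_v| = 2$, so $|G| = 2|V(\G)| = |E(\G)|$ and the edge-transitive group $G$ acts \emph{regularly} on edges, i.e.\ sharply transitively on each of its two arc-orbits. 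One then takes $\rho \in G$ to be the unique element mapping the arc $(u_0,u_1)$ of a fixed alternating cycle $C$ to $(u_{\L},u_{\L+1})$ when $\L$ is even, or to $(u_{2\L},u_{2\L+1})$ when $\L$ is odd (the parity condition guarantees the two arcs lie in the same $G$-orbit, since $u_0$ and $u_\L$, resp.\ $u_{2\L}$, are then both tails). Because $\rho$ sends $C$ to the alternating cycle through the image arc, namely $C$ itself, it is an $\L$-step (resp.\ $2\L$-step) rotation of $C$, hence fixes all attachment sets meeting $C$, hence lies in $K$ by Lemma~\ref{le:rotation}. Finally, $a < r$ forces $K$ to act semiregularly on $V(\G)$ (no two neighbours of a vertex lie on the same pair of alternating cycles), and since the orbit of $u_0$ under $K$ is confined to the tails in $C \cap C'$, this bounds $|K|$ by $a$ (resp.\ $a/2$), so $K = \langle \rho \rangle$ exactly. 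As written, your proposal establishes only that $K$ is cyclic of order dividing $a$ (or $a/2$), not that it attains that order.

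Two smaller corrections. In case (i) you assert there is a single alternating cycle; in fact when $a = 2r$ there are exactly \emph{two}, each containing all vertices of $\G$, which is why $K = D_r$ while $G$ is an extension of $K$ by $\ZZ_2$. In case (iv) the reason $a = 2$ is exceptional is not that ``the relevant element may coincide with the identity'': the antipodal automorphism $\tau$ of \cite{PotSpa17} is always a nontrivial automorphism of $\G$; the issue is that it may fail to belong to $G$, and Proposition~\ref{pro:stab} (hence edge-regularity, hence the existence argument above) is unavailable when $a = 2$. Note also that the paper disposes of the tightly attached cases (ii) and (iii) simply by citing \cite[Proposition~3.1]{Mar98}, since there the kernel coincides with the kernel of the action on the attachment sets; your direct sketches for those cases are plausible but would likewise need their details supplied.
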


\begin{proof}
By \cite[Proposition~2.4]{Mar98} the graph $\Alt_G(\G)$ has at least three vertices unless $a = 2r$, in which case there are exactly two $G$-alternating cycles (each containing all of the vertices of $\G$) and $K = D_r$ is the dihedral group of order $2r$ (while $G$ itself is an extension of $K$ by $\ZZ_2$). Moreover, the case $a = r$, that is when $\G$ is a tightly $G$-attached graph, is settled by \cite[Proposition~3.1]{Mar98} since in this case the kernel $K$ clearly coincides with the kernel of the action of $G$ on the set of all $G$-attachment sets. For the rest of the proof we can thus assume that $\G$ has at least three $G$-alternating cycles and that $1 \leq a < r$. Observe that in this case the action of $K$ on $\G$ is semiregular since $a < r$ implies that no two neighbors of a vertex of $\G$ can belong to the same pair of $G$-alternating cycles.

Next, observe that if $a = 1$, that is if $\G$ is loosely $G$-attached, the kernel $K$ is trivial. We can thus further assume that $a \geq 2$. For the rest of the proof we adopt the notation from Lemma~\ref{le:mult}. We set $\L=2r/a$, fix one of the two $G$-induced orientations of the edges of $\G$, set $Q_G(\G) = \{q_t, q_h\}$ and we let $v \in V(\G)$. Furthermore, we let $C = (u_0,u_1, \ldots, u_{2r-1})$ and $C'=(v_0,v_1, \ldots, v_{2r-1})$ be the two $G$-alternating cycles containing $v = u_0 = v_0$ where $v$ is the tail of the two arcs of $C$ incident to it and $u_{\L} = v_{q_t\L}$. 

Suppose first that $a = 2$. If $r$ is odd (in which case $a \nmid r$), then $u_0$ is the tail, while $u_r$ is the head of the two arcs of $C$ incident to $u_0$ and $u_r$, respectively. It follows that no element of $K$ can map $u_0$ to $u_r$, and so $K$ is trivial in this case. If however $r$ is even, then the fact that the intersection of any two adjacent $G$-alternating cycles is a pair of antipodal vertices on both of them implies that the automorphism $\tau$, mapping each vertex to its antipodal counterpart on both $G$-alternating cycles containing it (see \cite[Proposition~7]{PotSpa17}), is the only possible nontrivial element of $K$. Depending on whether $\tau$ is or is not contained in $G$, the kernel $K$ is either the cyclic group of order $2$ or is trivial, respectively. 

We are left with the case $3 \leq a < r$. Recall that the vertex $u_\L$ is the tail of the two arcs of $C$ incident to it if and only if $\ell$ is even, which occurs if and only if $a$ divides $r$. Since $a \geq 3$, Proposition~\ref{pro:stab} implies that $G$ acts regularly on the edge set of $\G$. If $\L$ is even let $\rho \in G$ be the unique element mapping the arc $(u_0, u_1)$ to the arc $(u_\L, u_{\L+1})$ and if $\L$ is odd let $\rho \in G$ be the unique element mapping the arc $(u_0, u_1)$ to $(u_{2\L}, u_{2\L+1})$. It is clear that the restriction of the action of $\rho$ to $C$ is an $\L$-step or $2\L$-step rotation, depending on whether $a$ divides $r$ or not, respectively. In both cases $\rho$ fixes $C$ as well as all of the $\L$ attachment sets containing the vertices of $C$. Thus $\rho$ satisfies the conditions of Lemma~\ref{le:rotation}, and so $\rho \in K$. Since $C \cap C' = \{u_{i\L} \colon 0 \leq i < a\}$ and the action of $K$ on $\G$ is semiregular, it follows that $K = \langle \rho \rangle$, which completes the proof.   
\end{proof}

We point out that for $a = 2$ and $r$ even both possibilities from item (iv) of the above theorem can indeed occur. Going through the census of all tetravalent graphs of order up to $1000$, admitting a half-arc-transitive subgroup of automorphisms~\cite{PotSpiVer15}, one can verify that in most cases where $a = 2$ and $r$ is even the kernel $K_G(\Alt_G(\G))$ is nontrivial (and is thus of order $2$). However, there are examples where it is in fact trivial. The smallest such example in the above mentioned census is the graph $\G =$ $\GHAT[162,1]$ of order $162$ whose automorphism group $\aut(\G)$ is of order $1296$. The group $\aut(\G)$ acts arc-transitively on $\G$ but it has two (normal) subgroups $G_1$ and $G_2$ of orders $324$ and $648$, respectively, acting half-arc-transitively on $\G$, for both of which the corresponding radius is $6$ and the attachment number is $2$. However, for the group $G_1$ the corresponding kernel $K_G(\Alt_G(\G))$ is trivial, while for the group $G_2$, it is of order $2$. Observe that, since the automorphism $\tau$ from \cite[Proposition~7]{PotSpa17} centralizes $G_1$, it is clear that $G_2 = G_1 \times \langle \tau \rangle$. This of course will always be the case, that is, even if the situation from Theorem~\ref{the:kernel}~(iv) with $a = 2$, $r$ even and $K$ trivial occurs for some half-arc-transitive subgroup $G \leq \aut(\G)$, the group $G \times \langle \tau \rangle$ will also act half-arc-transitively on $\G$ with the same natural orientation of the edges and with the corresponding kernel being $\langle \tau \rangle$ and thus of order $2$. Nevertheless, the following problem naturally arises.

\begin{problem}
Classify all tetravalent graphs $\G$, admitting a half-arc-transitive subgroup $G \leq \aut(\G)$ with $\att_G(\G) = 2$ and $\rad_G(\G)$ even, such that $K_G(\Alt_G(\G))$ is trivial. In particular, does a half-arc-transitive graph $\G$ with these properties exist?
\end{problem}

We now focus on the quotient graph $\G_{\B}$ from~\cite{MarPra99} (the graph $\G_\B$ was denoted by $\G_\Sigma$ in~\cite{MarPra99}) and the kernel of the action of $G$ on it. We first recall the definition. For ease of reference in the remainder of the paper we formulate this as a construction.

\begin{construction}[\cite{MarPra99}]
\label{cons:quot}
Let $\G$ be a tetravalent graph admitting a half-arc-transitive group of automorphisms $G \leq \aut(\G)$ and let $r = \rad_{G}(\G)$, $a = \att_{G}(\G)$ and $\L=2r/a$. Define 
$$
s=\left\{ \begin{array}{lcl}
\L/2 & : & \mbox{  if } \L \mbox{  is even};\\
\L &  : &  \mbox{  if } \L \mbox{  is odd}.
\end{array}
 \right.
$$ 
Let $C=(u_0,u_1,\ldots,u_{2r-1})$ be a $G$-alternating cycle. For each $u_i \in V(C)$ set $B_s(C;u_i)=\{u_{i+2sj} \colon 0\leq j < a\}$ (where the indices are computed modulo $2r$). Let $u = u_i \in V(C)$ for some $i \in \{0,1,\ldots,2r-1\}$ and note that if $\L$ is even the set $B_s(C;u)$ is precisely the $G$-attachment set containing $u$, while if $\L$ is odd the $G$-attachment set containing $u$ is $B_s(C;u) \cup B_s(C,u_{i+\L})$. Moreover, if $\L$ is odd and $u$ is the tail of the two arcs of $C$ incident to it, then all of the vertices in $B_s(C;u)$ also have this property, while all of the vertices in $B_s(C;u_{i+\L})$ are heads of the two arcs of $C$ incident to them. The set $B_s(C;u)$ thus forms a block of imprimitivity for $G$, and so $\B = \{B_s(C;u)\gamma \colon\gamma \in G\}$ is an imprimitivity block system for $G$ (note that it does not depend on the choice of the $G$-alternating cycle $C$ nor the vertex $u$ of $C$). The graph $\G_{\B}$ is then defined as the quotient graph of $\G$ with respect to $\B$, whose vertex set coincides with $\B$ with two blocks $B$ and $B'$ from $\B$ being adjacent whenever there is a pair of adjacent vertices $u \in B$ and $v \in B'$. 
\end{construction}
 
Before stating and proving the next theorem in which we compare the kernel of the action of $G$ on $\Alt_G(\G)$ and $\G_\B$ we fix the following notation. Let $\cal{A}$ be the set of all $G$-attachment sets of $\G$ (note that $\mathcal{A} = \B$ if and only if $a$ divides $r$). Then $G$ has a natural action on the graph $\G_{\B}$ as well as on the set $\cal{A}$. Let $K_G(\G_{\B})$ and $K_G(\cal{A})$ be the kernels of these actions. The following result shows that these two kernels are closely related to the kernel $K_G(\Alt_G(\G))$. 

\begin{theorem}
\label{the:allkernels}
Let $\G$ be a tetravalent $G$-half-arc-transitive graph for some $G\leq \aut(\G)$ and let $r = \rad_G(\G)$ and $a = \att_G(\G)$. Let $\G_\B$ be as in Construction~\ref{cons:quot} and let $\cal{A}$ be the partition of $V(\G)$ described in the preceding paragraph. If $a<2r, $ then
$$
K_G(\G_{\B})=K_G(\Alt_G(\G))=K_G(\cal{A}).
$$
\end{theorem}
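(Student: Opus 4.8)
The plan is to prove the two equalities $K_G(\Alt_G(\G))=K_G(\cal{A})$ and $K_G(\G_\B)=K_G(\cal{A})$ separately, in each case by showing mutual containment of the kernels. Throughout I fix one of the two $G$-induced orientations, set $\L=2r/a$, and write $Q_G(\G)=\{q_t,q_h\}$; I also adopt the notation of Lemma~\ref{le:mult}, so for a fixed vertex $v$ the two $G$-alternating cycles through it are $C=(u_0,\ldots,u_{2r-1})$ and $C'=(v_0,\ldots,v_{2r-1})$. The guiding observation is that an element of $G$ fixing a $G$-alternating cycle as a subgraph need not fix each individual attachment set meeting it, and conversely fixing attachment sets does not obviously fix whole alternating cycles; the content of the theorem is that, when $a<2r$, these distinctions collapse.

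\emph{The equality $K_G(\Alt_G(\G))=K_G(\cal{A})$.} First I would argue $K_G(\cal{A})\subseteq K_G(\Alt_G(\G))$. Take $\gamma\in K_G(\cal{A})$, so $\gamma$ fixes every $G$-attachment set setwise. I claim $\gamma$ fixes every $G$-alternating cycle. Since each $G$-alternating cycle $C$ is the union of the $a$ vertices it shares with each of the attachment sets it meets --- more precisely $V(C)=\bigcup_i\{u_{i\L}\}$ organized via (\ref{eq:intersect}) --- and since $\gamma$ preserves each such attachment set, one checks that $\gamma$ maps the vertex set of $C$ into the union of attachment sets meeting $C$; using that consecutive vertices of $C$ lie in distinct attachment sets (which holds because $a<r$ forces $\L\geq 2$, and one handles the $\L$-even versus $\L$-odd cases as in Lemma~\ref{le:rotation}) one concludes $\gamma$ preserves $V(C)$ and hence fixes $C$ as a subgraph. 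Thus $\gamma\in K_G(\Alt_G(\G))$. For the reverse inclusion $K_G(\Alt_G(\G))\subseteq K_G(\cal{A})$, take $\gamma$ fixing every $G$-alternating cycle. The key step is exactly the rotation analysis inside the proof of Lemma~\ref{le:rotation}: fixing $C$ forces the restriction of $\gamma$ to $C$ to be a rotation by a multiple of $\L$ (using $a<r$ to rule out a reflection, since a reflection would have to fix a vertex and then fix an edge, contradicting edge-transitivity when $a\geq 3$; the cases $a\leq 2$ are degenerate and handled directly), and a rotation by a multiple of $\L$ permutes the attachment sets meeting $C$ cyclically. Since $\gamma$ simultaneously fixes \emph{all} alternating cycles, applying this to every cycle through a fixed attachment set $A$ and invoking connectedness shows the induced permutation of attachment sets is trivial, giving $\gamma\in K_G(\cal{A})$.

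\emph{The equality $K_G(\G_\B)=K_G(\cal{A})$.} When $\L$ is even we have $\cal{A}=\B$ by Construction~\ref{cons:quot}, so there is nothing to prove and the equality is immediate. The genuine case is $\L$ odd, where each $G$-attachment set splits as a disjoint union $B_s(C;u)\cup B_s(C;u_{i+\L})$ of two blocks of $\B$, distinguished by whether their vertices are tails or heads of the incident arcs of $C$. The inclusion $K_G(\cal{A})\subseteq K_G(\G_\B)$ requires showing that an element fixing each attachment set also fixes each of its two constituent $\B$-blocks; this follows because $\gamma$ preserves the orientation, hence preserves the tail/head dichotomy recorded in Construction~\ref{cons:quot}, and the two blocks are precisely the tail-block and the head-block, which $\gamma$ therefore cannot interchange. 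The reverse inclusion $K_G(\G_\B)\subseteq K_G(\cal{A})$ is trivial once $\B$ refines $\cal{A}$: fixing every block of $\B$ forces fixing every union of blocks, in particular every attachment set.

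\emph{The main obstacle} I anticipate is the orientation-bookkeeping in the $\L$-odd case of the second equality, together with the careful handling of the reflection-versus-rotation dichotomy in the inclusion $K_G(\Alt_G(\G))\subseteq K_G(\cal{A})$. The latter is where the hypothesis $a<2r$ is really used: it guarantees $\Alt_G(\G)$ has at least three vertices (by \cite[Proposition~2.4]{Mar98}) so that the semiregularity argument from the proof of Theorem~\ref{the:kernel} applies and a cycle-fixing element cannot act as a reflection. I would be careful to dispatch the small cases $a=1$ and $a=2$ first (where all three kernels are easily seen to coincide, being trivial or of order at most $2$), so that the main argument runs under the clean hypothesis $a\geq 3$, allowing Proposition~\ref{pro:stab} and the rotation structure of Lemma~\ref{le:rotation} to be invoked without caveat.
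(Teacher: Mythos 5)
Your overall decomposition (first $K_G(\Alt_G(\G))=K_G(\mathcal{A})$, then $K_G(\G_\B)=K_G(\mathcal{A})$ with the split on the parity of $\L$) matches the paper's, and your argument for the genuinely nontrivial inclusion $K_G(\mathcal{A})\subseteq K_G(\Alt_G(\G))$ is sound and in fact different from the paper's: you note that $V(C)$ is a union of $G$-attachment sets (each attachment set meeting $C$ lies entirely inside $V(C)$), so any $\gamma$ fixing all attachment sets fixes $V(C)$ setwise, and then $C\gamma=C$ because distinct $G$-alternating cycles have distinct vertex sets --- which is precisely where $a<2r$ enters (the paper instead reaches a contradiction by showing such a $\gamma$ would have to interchange $C$ with $C'$ and then tracking a third cycle $C''$ through the neighbour $u_1$). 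However, your treatment of the reverse inclusion $K_G(\Alt_G(\G))\subseteq K_G(\mathcal{A})$ has a genuine gap. Your rotation analysis explicitly assumes $a<r$, but the theorem only assumes $a<2r$, so the tightly attached case $a=r$ must also be covered; and there your key claim is simply false: for $a=r>2$ the kernel $K_G(\Alt_G(\G))$ is dihedral of order $2a$ (Theorem~\ref{the:kernel}(iii)), so it contains elements whose restriction to an alternating cycle is a reflection, not a rotation by a multiple of $\L$. The point you are missing is that this inclusion requires no analysis at all: every $G$-attachment set is by definition an intersection $C_1\cap C_2$ of two $G$-alternating cycles, so any $\gamma$ fixing all alternating cycles satisfies $(C_1\cap C_2)\gamma=C_1\gamma\cap C_2\gamma=C_1\cap C_2$; this one line is the paper's entire argument. (Your further assertion that a rotation by a multiple of $\L$ ``permutes the attachment sets meeting $C$ cyclically'' so that connectedness is needed to make the permutation trivial is also off: such a rotation fixes each attachment set meeting $C$ outright, since it maps $u_{j+i\L}$ to $u_{j+(i+k)\L}$.)

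There is a second, smaller gap in the $\L$-odd case of $K_G(\mathcal{A})\subseteq K_G(\G_\B)$. You claim $\gamma$ cannot interchange the two blocks $B_s(C;u_i)$ and $B_s(C;u_{i+\L})$ of an attachment set ``because $\gamma$ preserves the orientation''. Orientation-preservation alone does not suffice: the tail/head dichotomy separating the two blocks is relative to the cycle $C$, since each vertex of $C\cap C'$ is a tail of the two incident arcs of one of $C,C'$ and a head of the two incident arcs of the other. Hence an orientation-preserving automorphism fixing $C\cap C'$ setwise while interchanging $C$ and $C'$ would interchange the two blocks. To rule this out you must invoke the equality $K_G(\mathcal{A})=K_G(\Alt_G(\G))$ established in your first step: since $\gamma$ fixes both $C$ and $C'$, it maps tails of $C$ to tails of $C$, hence preserves the parity of $j$ in $u_{i+j\L}$ and fixes both blocks setwise --- which is exactly how the paper argues. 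This second gap is fixable within your structure as written; the first one requires replacing the argument, not patching it.
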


\begin{proof}
Let $\alpha \in K_G(Alt_G(\G))$. Then $\alpha$ fixes each $G$-alternating cycle, and so it also fixes all the $G$-attachment sets since they are the intersections of $G$-alternating cycles. Thus $K_G(\Alt_G(\G))\subseteq K_G(\mathcal{A})$. 

Suppose now there exists $\gamma \in K_G(\mathcal{A}) \backslash K_G(\Alt_G(\G))$. Let $\L=2r/a$, let $C=(u_0,u_1, \ldots, u_{2r-1})$ be a $G$-alternating cycle such that $C\gamma \neq C$ and let $C'=(v_0,v_1, \ldots, v_{2r-1})$ be the other $G$-alternating cycle containing the vertex $u_0$, where $u_0=v_0$. Since $\gamma \in K_G(\mathcal{A})$, it fixes the $G$-attachment set $C \cap C'$ setwise, and so it interchanges the cycles $C$ and $C'$. Note that $a<2r$ implies that the vertex $u_1$ is not contained on $C'$ but on some $G$-alternating cycle $C''$ with $C''\neq C$ and $C''\neq C'$. But since $\gamma$ also fixes setwise the $G$-attachment set $C\cap C''$ we get $C\cap C''=C\gamma\cap C'' \gamma=C' \cap C''\gamma$, which is impossible. Therefore, $K_G(\Alt_G(\G))=K_G(\mathcal{A})$, as claimed.

To complete the proof we need to verify that $K_G(\G_\B)=K_G(\mathcal{A})$. Of course, if $a$ divides $r$ then $\L$ is even in which case $\B$ coincides with $\cal{A}$, and so there is nothing to prove. We can thus assume that $a \nmid r$. 
In this case each $G$-attachment set is the disjoint union of two elements of $\B$, and so it is clear that $K_G(\G_{\B}) \leq K_G(\mathcal{A})$. By the first part of the proof it thus suffices to prove that $K_G(\Alt_G(\G))  \leq K_G(\G_{\B})$.  Let $\gamma \in K_G(\Alt_G(\G))$ and let $C = (u_0,u_1, \ldots, u_{2r-1})$ be a $G$-alternating cycle. Let $u = u_i$ for some $i \in \{0,1,\ldots,2r-1\}$ and let $C'$ be the other $G$-alternating cycle containing $u$. Since in this case $\L$ is odd, $C \cap C' = B_s(C;u_i) \cup B_s(C,u_{i+\L})$. Since $\gamma$ fixes both $C$ and $C'$, each $u_{i+j\L}$ is mapped by $\gamma$ to a vertex $u_{i+j'\L}$ such that $j'$ has the same parity as $j$ (otherwise $\gamma$ interchanges $C$ and $C'$). Therefore $\gamma$ fixes each of $B_s(C;u_i)$ and $B_s(C,u_{i+\L})$ setwise, and so it fixes all the elements of $\B$ setwise. It thus follows that $K_G(\Alt_G(\G))  \leq K_G(\G_{\B})$, which completes the proof.
\end{proof}

As was pointed out in the Introduction, the quotient graph $\G_\B$ from Construction~\ref{cons:quot} is of great importance in the study of tetravalent graphs admitting a half-arc-transitive group of automorphisms. Namely, by~\cite[Theorem~3.6]{MarPra99} a tetravalent graph $\G$ admitting a half-arc-transitive group of automorphisms $G \leq \aut(\G)$ with $\att_G(\G) \neq 2\rad_G(\G)$ is either tightly $G$-attached or the quotient graph $\G_\B$ is a tetravalent graph admitting a half-arc-transitive action of a quotient group of $G$, relative to which it is loosely or antipodally attached. Of course, this quotient group is precisely the group $G/K_G(\G_\B)$. Combining together Theorems~\ref{the:kernel} and~\ref{the:allkernels} we can make an important improvement of \cite[Theorem~3.6]{MarPra99}. Namely, we now know that the imprimitivity block system $\B$ actually coincides with the set of orbits of the cyclic normal subgroup $K_G(\G_\B)$ (recall that in the case that $\rad_G(\G)$ is even, $\att_G(\G) = 2$ and $K_G(\Alt_G(\G))$ is trivial, we first need to enlarge the group $G$ to $G \times \langle \tau\rangle$). We thus obtain the following result.

\begin{theorem}
\label{the:quotient}
Let $\G$ be a tetravalent $G$-half-arc-transitive graph for some $G \leq \aut(\G)$ such that $a \neq 2r$, where $r = \rad_G(\G)$ and $a = \att_G(\G)$. In the case that $r$ is even, $a = 2$ and the automorphism $\tau$ of $\G$, interchanging all pairs of antipodal vertices on the $G$-alternating cycles of $\G$, is not contained in $G$, replace $G$ by $G \times \langle \tau \rangle$. Let $\B$ and $\G_\B$ be as in Construction~\ref{cons:quot}. Then precisely one the following holds:
\begin{itemize}
\itemsep = 0pt
\item[{\rm (i)}] $a = r$, that is $\G$ is tightly $G$-attached;
\item[{\rm (ii)}] $a < r$ and the partition $\B$ coincides with the orbits of the cyclic normal subgroup $K_G(\G_\B)$ which is of order $a$ or $a/2$, depending on whether $a$ divides $r$ or not, respectively. Moreover, letting $\bar{G} = G/K_G(\G_\B)$, the quotient graph $\G_\B$ is a tetravalent $\bar{G}$-half-arc-transitive graph which is loosely $\bar{G}$-attached or antipodally $\bar{G}$-attached, depending on whether $a$ divides $r$ or not, respectively.
\end{itemize}
\end{theorem}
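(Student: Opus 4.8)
The plan is to combine the divisibility relation $a \mid 2r$ with the precise information about the relevant kernels furnished by Theorems~\ref{the:kernel} and~\ref{the:allkernels}. First I would observe that the hypothesis $a \neq 2r$ forces $\L = 2r/a \geq 2$, hence $a \leq r$; thus either $a = r$, which is exactly case (i) and requires no further argument, or $a < r$, which is the substantive case (ii). Throughout case (ii) I would note that the passage from $G$ to $G \times \langle \tau \rangle$ is invoked only in the subcase $r$ even, $a = 2$ (so in fact $r \geq 4$), and that, as recorded in the discussion preceding the theorem, this enlargement preserves the $G$-induced orientation and therefore leaves $r$, $a$, the family of alternating cycles, the attachment sets and the partition $\B$ unchanged; so we may argue with the (possibly enlarged) group and write $K = K_G(\G_\B)$.

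Assuming $a < r$, and hence $a < 2r$, Theorem~\ref{the:allkernels} gives $K_G(\G_\B) = K_G(\Alt_G(\G))$, and I would then read off the isomorphism type of $K$ directly from Theorem~\ref{the:kernel}: case (iv) yields $K \cong \ZZ_a$ when $a \mid r$, and case (v) yields $K \cong \ZZ_{a/2}$ when $a \nmid r$. The only caveat in Theorem~\ref{the:kernel}(iv), namely that $K$ may be trivial when $a = 2$, is precisely the situation neutralised by the enlargement, since after replacing $G$ by $G \times \langle \tau \rangle$ one has $K = \langle \tau \rangle \cong \ZZ_2 = \ZZ_a$. In every case $K$ is cyclic of the stated order and, being a kernel, is normal in $G$.

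It then remains to identify $\B$ with the orbits of $K$ and to transfer the conclusions about $\G_\B$. Since $K$ fixes every block of $\B$ setwise (it is the kernel of the action on $\G_\B$), each $K$-orbit lies inside a single block, so the orbits of $K$ refine $\B$. On the other hand, $K$ acts semiregularly on $V(\G)$, as shown in the proof of Theorem~\ref{the:kernel} using $a < r$, so every $K$-orbit has size $|K|$. A direct count from Construction~\ref{cons:quot} shows that a block $B_s(C;u_i)$ has exactly $a$ vertices when $\L$ is even (so $a \mid r$) and exactly $a/2$ vertices when $\L$ is odd (so $a \nmid r$, which forces $a$ even); in both cases the block size equals $|K|$, whence each block is a single $K$-orbit and $\B$ coincides with the orbit partition of $K$. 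Finally, since $a \neq 2r$ and $\G$ is not tightly $G$-attached, \cite[Theorem~3.6]{MarPra99} applies and shows that $\G_\B$ is a tetravalent graph admitting a half-arc-transitive action of the quotient group $\bar{G} = G/K_G(\G_\B)$, relative to which it is loosely attached if $a \mid r$ and antipodally attached if $a \nmid r$; this settles case (ii).

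The main obstacle I expect is not conceptual but one of bookkeeping: one must verify the block-size count of Construction~\ref{cons:quot} in the $\L$-odd case, where the $2\L$-step pattern in a cycle of length $2r = a\L$ produces $a/2$ distinct vertices because $a$ is then even, and, more delicately, one must ensure that the exceptional trivial-kernel possibility for $a = 2$ is exactly and only the one repaired by passing to $G \times \langle \tau \rangle$, so that the uniform statement ``$|K|$ equals $a$ or $a/2$'' holds without exception.
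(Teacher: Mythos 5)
Your proposal is correct and follows essentially the same route as the paper, which obtains the theorem by combining Theorem~\ref{the:allkernels} (to identify $K_G(\G_\B)$ with $K_G(\Alt_G(\G))$), Theorem~\ref{the:kernel} (for the cyclic structure and order of that kernel, with the $a=2$, $r$ even exception repaired by passing to $G \times \langle \tau \rangle$), and \cite[Theorem~3.6]{MarPra99} (for the loosely/antipodally attached quotient). Your only addition is to spell out the semiregularity-plus-block-size counting that identifies $\B$ with the $K$-orbits, a step the paper asserts without detail; this is a faithful and correct filling-in rather than a different argument.
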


Theorem~\ref{the:quotient} provides the link between the frameworks for a possible classification of tetravalent graphs admitting a half-arc-transitive group of automorphism, started in~\cite{MarPra99,AlAlMuPrSp2016}. Namely, for a tetravalent graph $\G$ admitting a half-arc-transitive group of automorphism $G$ such that $\G$ does not have one of the three special attachment numbers (1,2 or $\rad_G(\G)$), Theorem~\ref{the:quotient} ensures the existence of a non trivial cyclic normal subgroup of $G$ such that the quotient graph with respect to its orbits is again a tetravalent graph admitting a half-arc-transitive group of automorphism with attachment number 1 or 2. Therefore, in order to describe or classify all the basic graphs in the sense of~\cite{AlAlMuPrSp2016} the tetravalent graphs admitting a half-arc-transitive group of automorphism relative to which they are loosely or antipodally attached will have to be thoroughly investigated. 
\medskip

In the remainder of this section we give two additional results on the graphs $\Alt_G(\G)$ and $\G_\B$, corresponding to a tetravalent $G$-half-arc-transitive graph $\G$ where $G \leq \aut(\G)$. Moreover, by making a careful analysis of the graphs from the census~\cite{PotSpiVer15} using a suitable package such as {\sc Magma}~\cite{Mag} several interesting questions about these two graphs arise which provide a possible direction for future research. We formulate some of these questions and give a few examples of graphs with interesting properties. 

Let $\G$ be a tetravalent $G$-half-arc-transitive graph where $G \leq \aut(\G)$ and let $a = \att_G(\G)$ and $r = \rad_G(\G)$. As already pointed out the group $G$ induces a natural action on both $\Alt_G(\G)$ and $\G_\B$. The quotient group $G/K_G(\Alt_G(\G))$ acts vertex- and edge-transitively on $\Alt_G(\G)$ and this action is arc-transitive if and only if $a$ does not divide $r$ and is half-arc-transitive otherwise (see~\cite[Proposition~4]{PotSpa17}). Of course, even if $a$ does divide $r$ the graph $\Alt_G(\G)$ may be arc-transitive. A similar situation holds for $\G_\B$, but in this case $G/K_G(\G_\B)$ always acts half-arc-transitively. As we now prove, a natural way for $\Alt_G(\G)$ and $\G_\B$ to be arc-transitive is that there is an automorphism of $\G$, interchanging the two $G$-induced orientations of the edges of $\G$.

\begin{proposition}
\label{le:int} 
Let $\G$ be a tetravalent $G$-half-arc-transitive graph for some $G \leq \aut(\G)$. If $\G$ is tightly $G$-attached or $\aut(\G)$ contains an element interchanging the two $G$-orbits on $A(\G)$, then $\Alt_G(\G)$ and $\G_{\B}$ are arc-transitive graphs. 
\end{proposition}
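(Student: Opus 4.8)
The plan is to reduce both assertions to one elementary transitivity principle and then verify its hypotheses in each of the two cases allowed by the statement. Write $a = \att_G(\G)$ and $r = \rad_G(\G)$, and let $\Lambda$ denote either $\Alt_G(\G)$ or $\G_\B$. For each such $\Lambda$ the induced action of $G$ is vertex- and edge-transitive (this is recorded just before the statement, and for $\Alt_G(\G)$ it is \cite[Proposition~4]{PotSpa17}). If this action happens to be arc-transitive, there is nothing left to prove for that graph; this already disposes of $\Alt_G(\G)$ whenever $a \nmid r$, again by \cite[Proposition~4]{PotSpa17}. In the remaining situations the induced $G$-action is half-arc-transitive, so $\Lambda$ carries a $G$-invariant orientation and $G$ is transitive on the set $F$ of its forward arcs. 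To conclude that $\Lambda$ is arc-transitive it then suffices to exhibit a single $\phi \in \aut(\Lambda)$ mapping $F$ onto the set of backward arcs, for then $\langle G, \phi \rangle$ is transitive on $F \cup \phi(F) = A(\Lambda)$.

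Next I would dispatch the tightly $G$-attached case directly, since here an orientation-reversing automorphism of $\G$ need not exist. When $a = r$ we have $\L = 2$ and the $G$-attachment sets are precisely the ``layers'', each $G$-alternating cycle being the union of two consecutive layers. Hence each $G$-alternating cycle meets exactly the two others sharing one of its layers, and each layer lies in exactly two $G$-alternating cycles; as $\G$ is connected and $m := |V(\G)|/r \ge 3$ by \cite[Proposition~2.4]{Mar98}, both $\Alt_G(\G)$ and $\G_\B$ are connected $2$-regular graphs, i.e. copies of the cycle $C_m$, which is arc-transitive.

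Finally I would treat the case where some $\sigma \in \aut(\G)$ interchanges the two $G$-orbits on $A(\G)$. Such a $\sigma$ sends the $G$-induced orientation $\O_G$ to its reverse, so it interchanges the sources and the sinks of the $G$-alternating cycles. The two facts to establish are: (a) $\sigma$ induces $\bar\sigma \in \aut(\Alt_G(\G))$, which holds because a cycle is $\O_G$-alternating exactly when it is alternating for the reversed orientation, so $\sigma$ permutes the $G$-alternating cycles, and being a graph automorphism it preserves their intersections; and (b) $\sigma$ preserves the block system $\B$ of Construction~\ref{cons:quot} and hence induces $\bar\sigma \in \aut(\G_\B)$. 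Granting (a) and (b), the fact that $\sigma$ swaps sources and sinks (equivalently tails and heads) shows that each induced $\bar\sigma$ reverses the $G$-invariant orientation of the corresponding graph, i.e. maps forward arcs to backward arcs; by the reduction of the first step both $\Alt_G(\G)$ and $\G_\B$ are therefore arc-transitive. The degenerate case $a = 2r$, where $\Alt_G(\G) \cong K_2$, is trivially arc-transitive and may be set aside.

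The step I expect to require the most care is (b): verifying that an orientation-reversing $\sigma$ maps the blocks of $\B$ to blocks of $\B$ when $\L$ is odd. When $\L$ is even $\B$ is just the set of $G$-attachment sets, which $\sigma$ permutes by (a); but when $\L$ is odd each attachment set $C \cap C'$ splits into two blocks, which are precisely its source-part and sink-part on one of the two cycles through it. The point is that this description is symmetric under interchanging the two orientations, so although $\sigma$ exchanges the labels ``source'' and ``sink'' it still carries this pair of blocks to the analogous pair for $\sigma(C) \cap \sigma(C')$. Once this symmetry is pinned down, the orientation-reversing property of $\bar\sigma$, and with it the arc-transitivity of both graphs, follows formally from the first paragraph.
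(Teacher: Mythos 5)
Your proof is correct, and its skeleton coincides with the paper's: the tightly attached case is settled the same way (both $\Alt_G(\G)$ and $\G_\B$ are cycles of length $|V(\G)|/\rad_G(\G)$, hence arc-transitive), and the heart of the remaining case is the same observation, namely that an orientation-reversing $\sigma \in \aut(\G)$ permutes the $G$-alternating cycles and the blocks of $\B$ and therefore induces automorphisms of both auxiliary graphs. Where you genuinely diverge is in how arc-transitivity is then extracted. The paper works locally and never identifies the invariant orientations of $\Alt_G(\G)$ and $\G_\B$: since $G$ is transitive on the second arc-orbit of $\G$, it chooses $\beta, \gamma \in G$ with $(u_0,u_1)\alpha\beta = (u_0,v_1)$ and $(u_0,u_1)\alpha\gamma = (u_1,u_0)$; then $\alpha\beta$ fixes $u_0$ and so interchanges the two adjacent alternating cycles through $u_0$, while $\alpha\gamma$ interchanges the two adjacent blocks containing $u_0$ and $u_1$, and vertex- and edge-transitivity of the induced actions finish the job --- no case distinction on whether $a$ divides $r$ is ever needed. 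Your argument is global: you show that $\bar\sigma$ maps forward arcs onto backward arcs, which obliges you to describe the invariant orientations explicitly (the source/sink pattern on attachment sets for $\Alt_G(\G)$, the common direction of the $\G$-edges between adjacent blocks for $\G_\B$) and to split off the case $a \nmid r$ via \cite[Proposition~4]{PotSpa17}, where the induced action on $\Alt_G(\G)$ is already arc-transitive. What your route buys in exchange is that step (b) makes explicit a point the paper passes over in a single clause (``and so $\alpha$ induces an automorphism of both''): that $\sigma$ preserves $\B$ even when $\L = 2\rad_G(\G)/\att_G(\G)$ is odd, because the two blocks inside an attachment set are precisely its source half and its sink half, a description symmetric under exchanging the two orientations. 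Two small repairs to your write-up: when you set aside $a = 2r$, note that $\G_\B$, and not only $\Alt_G(\G)$, degenerates to $K_2$ there; and your one-line claim that swapping sources and sinks reverses the orientation of $\Alt_G(\G)$ rests on the fact, worth stating, that when $a \mid r$ all vertices of $C \cap C'$ are sources on the same one of the two cycles --- this is exactly what fails when $a \nmid r$, and is why that case had to be split off.
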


\begin{proof} 
Note that if $\G$ is tightly $G$-attached, then $\Alt_G(\G)$ and $\G_{\B}$, are both isomorphic to the cycle of length $|V(\G)|/\rad_G(\G)$.  For the rest of the proof we can thus assume that $\G$ is not tightly $G$-attached.

Let $\O_1$ and $\O_2$ be the two $G$-orbits on $A(\G)$. Suppose that $\aut(\G)$ contains an element $\alpha$ interchanging $\O_1$ and $\O_2$ and let $(u,v) \in \O_1$. Since $\Alt_G(\G)$ and $\G_{\B}$ admit a half-arc-transitive action, it suffices to find an element of their automorphism groups which interchanges two adjacent vertices. Moreover, in terms of vertices and edges the $G$-alternating cycles with respect to $\O_1$ coincide with the $G$-alternating cycles with respect to $\O_2$, and so $\alpha$ induces an automorphism of both $\Alt_G(\G)$ and $\G_\B$. Let $C_1=(u_0,u_1,\ldots,u_{2r-1})$ and $C_2=(v_0,v_1,\ldots,v_{2r-1})$ be the two $G$-alternating cycles such that $u_0=v_0=u$ and $u_1=v$. Note that there exists $\beta \in G$ such that $(u_0,u_1)\alpha\beta = (u_0,v_1)$. Then $\alpha\beta$ maps $C_1$ to $C_2$, and since it fixes $u_0$, it maps $C_2$ to $C_1$, proving that $\Alt_G(\G)$ is arc-transitive. Similarly, there exists $ \gamma \in G$ such that $(u_0,u_1)\alpha\gamma = (u_1,u_0)$. Then $\alpha\gamma$ clearly interchanges the adjacent vertices of $\G_\B$  containing the vertices $u_0$ and $u_1$, respectively. Therefore the graph $\G_\B$ is also arc-transitive.
\end{proof}
  
Let us mention, that the situation from Proposition~\ref{le:int} seems to be quite common. Namely, by going through the census of all arc-transitive tetravalent graphs $\G$ admitting a half-arc-transitive group $G$ of automorphisms up to order $1000$, one finds that most of them admit automorphisms interchanging the two $G$-orbits on $A(\G)$. Nevertheless, there are examples where this does not hold. For instance, the arc-transitive graph $\G = \GHAT[21,1]$ is tightly attached with respect to a suitable half-arc-transitive subgroup of automorphisms, but there is no element of $\aut(\G)$ interchanging the corresponding orbits on $A(\G)$. The same occurs for the graph $\GHAT[252,14]$, which turns out to be the smallest example with this property that is not tightly attached with respect to the half-arc-transitive subgroup of automorphisms (it is loosely attached). 

Of course, each of the graphs $\Alt_G(\G)$ and $\G_\B$ can be arc-transitive even if no automorphism of $\G$ interchanges the two $G$-orbits on $A(\G)$. This turns out to be the case also for the above mentioned graphs $\GHAT[21,1]$ and $\GHAT[252,14]$. In fact, by going through the census from~\cite{PotSpiVer15} one finds that for all arc-transitive tetravalent graphs admitting a half-arc-transitive group $G \leq \aut(\G)$ up to order $1000$, the graphs $\Alt_G(\G)$ and $\G_\B$ are both arc-transitive. This suggests the following natural question.

\begin{question}
\label{que:AT}
Is it true that if $\G$ is an arc-transitive tetravalent graph admitting a half-arc-transitive group $G \leq \aut(\G)$, the graphs $\Alt_G(\G)$ and $\G_{\B}$ are both arc-transitive?
\end{question}

It is also interesting to investigate what can be said about arc-transitivity or half-arc-transitivity of the graphs $\Alt_G(\G)$ and $\G_\B$ in the case that $\G$ is half-arc-transitive. Going through the census of all tetravalent half-arc-transitive graphs of order up to $1000$ one finds that there are examples $\G$ for which $\Alt(\G)$ and/or $\G_{\B}$ are half-arc-transitive. For instance, for $\G = \mathrm{HAT}[500;6]$ (which has $\rad(\G) = 25$ and $\att(\G) = 5$) the graph $\G_{\B}$ is half-arc-transitive while $\Alt(\G)$ is arc-transitive. For $\G = \mathrm{HAT}[600;4]$ (which has $\rad(\G) = 6$ and $\att(\G) = 2$) the graphs $\G_{\B}$ and $\Alt(\G)$ are both half-arc-transitive. Finally, for $\G = \mathrm{HAT}[84;1]$ (which has $\rad(\G) = 14$ and $\att(\G) = 7$) both $\G_{\B}$ and $\Alt(\G)$ are arc-transitive. The census contains no example $\G$ for which $\Alt(\G)$ would be half-arc-transitive but the quotient graph $\G_\B$ would be arc-transitive. We thus pose the following natural question.

\begin{question}
\label{que:HAT2}
Does there exist a tetravalent half-arc-transitive graph $\G$ such that $\Alt(\G)$ is half-arc-transitive but $\G_\B$ is arc-transitive?
\end{question}

The following result, which shows that the graph of alternating cycles is independent of taking quotients with respect to $\B$, can be of help when dealing with the above two questions. 

\begin{proposition}
 \label{prop:2}
Let $\G$ be a tetravalent $G$-half-arc-transitive graph for some $G \leq \aut(\G)$ such that $\att_G(\G)<\rad_G(\G)$. Then $\Alt_G(\G)\cong \Alt_{\overline{G}}(\G_{\B})$, where $\overline{G} = G/K_G(\G_\B)$. 
\end{proposition}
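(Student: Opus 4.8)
The plan is to realize the isomorphism by the natural projection $\pi\colon \G \to \G_\B$ sending each vertex to the block of $\B$ containing it. First I would record the ambient facts I intend to use. Since $\att_G(\G) < \rad_G(\G)$, Theorem~\ref{the:quotient} guarantees that $\G_\B$ is a tetravalent $\overline{G}$-half-arc-transitive graph, so the whole alternating-cycle machinery of \cite{Mar98} applies to it. In particular, the $G$-induced orientation of $\G$ pushes forward along $\pi$ to a well-defined orientation of $\G_\B$: the pushforward is $\overline{G}$-invariant, and it is consistent because no element of $\overline{G}$ can reverse an edge of $\G_\B$ (such an element would make $\G_\B$ arc-transitive, contradicting half-arc-transitivity), hence no $g\in G$ interchanges a pair of adjacent blocks. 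This pushforward is therefore an $\overline{G}$-induced orientation of $\G_\B$. I would then define $\Phi$ by sending a $G$-alternating cycle $C$ to the image subgraph $\pi(C)$, and the goal is to show that $\Phi$ is an isomorphism $\Alt_G(\G)\to\Alt_{\overline{G}}(\G_\B)$.

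The first step is to check that $\Phi$ is well defined, i.e.\ that $\pi(C)$ is genuinely a $\overline{G}$-alternating cycle. Writing $C=(u_0,\dots,u_{2r-1})$ and $\ell=2r/a$, I would split into the two cases of Construction~\ref{cons:quot}. If $a\mid r$ then $\ell$ is even, the blocks are exactly the $G$-attachment sets, and $\pi(u_i)$ depends only on $i$ modulo $\ell$; hence $\pi(C)$ is the closed alternating walk on the $\ell$ pairwise distinct blocks $\pi(u_0),\dots,\pi(u_{\ell-1})$, which as a subgraph is a cycle of length $\ell$. If $a\nmid r$ then $\ell$ is odd, each attachment set splits into two blocks, $\pi(u_i)$ depends on $i$ modulo $2\ell$, and $\pi(C)$ is a cycle of length $2\ell$. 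In both cases consecutive edges of $\pi(C)$ inherit opposite orientations from the alternating edges of $C$, the even length makes this alternation close up consistently, and the relevant blocks are pairwise distinct, so $\pi(C)$ is an alternating cycle of $\G_\B$. No separate computation of $\rad_{\overline{G}}(\G_\B)$ is needed, since every alternating cycle automatically has length $2\rad_{\overline{G}}(\G_\B)$.

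For bijectivity I would isolate the structural fact $(\star)$: every block $B\in\B$ is contained in a single $G$-attachment set, and every $G$-attachment set lies on exactly two $G$-alternating cycles; consequently $B$ meets exactly two $G$-alternating cycles. Surjectivity is then immediate: each edge of $\G$ lies on a unique $G$-alternating cycle and each edge of $\G_\B$ on a unique $\overline{G}$-alternating cycle, and $\pi$ maps edges to edges, so lifting an edge of a given $\overline{G}$-alternating cycle and taking the $G$-alternating cycle through it produces a $\Phi$-preimage. Injectivity and adjacency-preservation both reduce, via $(\star)$, to the single claim that two distinct $G$-alternating cycles $C,C'$ sharing a vertex $w$ satisfy $\pi(C)\neq\pi(C')$: if $\pi(C_1)=\pi(C_2)$ then picking a block $B$ on the common image forces $C_1,C_2$ into the two-element set of cycles through $B$, and the claim then yields $C_1=C_2$; while for adjacency one checks that $C_1\cap C_2\neq\emptyset$ holds iff the images share a block, again using $(\star)$.

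The heart of the argument, and the step I expect to be the main obstacle, is this last claim that cycles sharing a vertex have distinct images, since it is exactly where the hypothesis $\att_G(\G)<\rad_G(\G)$ is used. Let $A=V(C)\cap V(C')$ be the common attachment set and $w\in A$. Along $C$ the vertices of $A$ are spaced $\ell$ apart, and $\ell\geq 3$ precisely because $a<r$; hence the two neighbours $x,y$ of $w$ on $C$ do not lie in $A$, and symmetrically the two $C'$-neighbours $x',y'$ of $w$ avoid $A$. I would then argue that $\{x,y\}$ and $\{x',y'\}$ project to disjoint sets of blocks: if, say, $\pi(x)=\pi(x')$, then by $(\star)$ the block containing $x,x'$ would lie in $V(C)\cap V(C')=A$, forcing $x\in A$, a contradiction. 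Since $\pi(w)$ has exactly two neighbours inside any alternating cycle through it, the relation $\pi(C)=\pi(C')$ would require $\{\pi(x),\pi(y)\}=\{\pi(x'),\pi(y')\}$, which is impossible. This establishes $\pi(C)\neq\pi(C')$ and completes the verification that $\Phi$ is a graph isomorphism.
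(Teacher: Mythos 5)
Your proposal is correct and follows essentially the same route as the paper: both realize the isomorphism by projecting each $G$-alternating cycle to its sequence of blocks in $\G_\B$ and observing that the image is a $\overline{G}$-alternating cycle. The paper dismisses bijectivity and adjacency-preservation as easy to see, whereas you fill in exactly those details (the pushforward orientation, your fact $(\star)$, and the spacing argument using $a<r$); this is an elaboration of the same construction, not a different method.
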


\begin{proof}  
Note that if $\G$ is loosely $G$-attached then $\G \cong \G_{\B}$, so there is nothing to prove. We can thus assume that $a=\att_G(\G) >1$. We now construct an isomorphism $\psi \colon \Alt_G(\G) \to \Alt_{\overline{G}}(\G_\B)$. For each $v \in V(\G)$ define $B_v$ to be the element of $\B$ containing $v$. Let $C=(u_0,u_1,\ldots,u_{2r-1}) \in V(\Alt_G(\G))$, where $r=\rad_G(\G)$, be a $G$-alternating cycle of $\G$ and $\L=2r/ a$. Suppose first that $a$ divides $r$. Then for each $0\leq i\leq \L-1$ we have $B_{u_i} = \{u_{i +j\L} \mid 0\leq j \leq a-1 \}$. In this case $(B_{u_0}, B_{u_1},B_{u_2},\ldots, B_{u_{\L-1}})$ is a $\overline{G}$-alternating cycle of $\G_\B$, which we set as the $\psi$-image of $C$. Suppose now that $a$ does not divide $r$ and recall that in this case $a$ is even. Then for each $0\leq i\leq 2\L-1$ we have $B_{u_i} = \{u_{i +j\L} \mid 0\leq j \leq a/2-1 \}$. In this case $(B_{u_0}, B_{u_1},B_{u_2},\ldots, B_{u_{2\L-1}})$ is a $\overline{G}$-alternating cycle of $\G_\B$, which we set as the $\psi$-image of $C$. It is easy to see that in both cases the mapping $\psi$ is a bijection. Since adjacency in the graph of alternating cycles is given by non-empty intersection, $\psi$ is clearly an isomorphism.
\end{proof}

Note that Proposition~\ref{prop:2} implies that in the case that the answer to Question~\ref{que:AT} is in the affirmative, the answer to Question~\ref{que:HAT2} is negative. Namely, suppose that the answer to Question~\ref{que:AT} is in the affirmative. If $\G$ is half-arc-transitive but $\G_\B$ is arc-transitive, then either $\G$ is tightly attached (in which case $\Alt(\G)$ is a cycle and is thus arc-transitive), or Proposition~\ref{prop:2} implies that $\Alt(\G)\cong \Alt_{\overline{G}}(\G_{\B})$, which, by assumption is arc-transitive.

There are several other curiosities one can observe by studying the census of all tetravalent half-arc-transitive graphs up to order $1000$ from~\cite{PotSpiVer15}. We mention just two of them. For all graphs $\G$ from the census such that $\Alt(\G)$ is also half-arc-transitive the graph $\G$ is either loosely attached or $\jum(\G) = 1$. Similarly, whenever $\G_\B$ is half-arc-transitive $|Q(\G)| = 1$.


\section{The graphs with $\boldsymbol{\att_G(\G) \nmid \rad_G(\G)}$}

Let $\G$ be a tetravalent graph admitting a half-arc-transitive group $G$ of automorphisms and let $r = \rad_G(\G)$ and $a = \att_G(\G)$. As we already pointed out in the Introduction it is well know that $a$ divides $2r$. However, it may happen that $a$ does not divide $r$ (see for instance~\cite{PotSpa17}, where the examples with $r = 3$ and $a = 2$ were characterized). Nevertheless, for all of the known examples where $a$ does not divide $r$, the graph $\G$ is in fact arc-transitive. This has led to the question, posed as Question 1 in~\cite{PotSpa17}, of whether in each tetravalent half-arc-transitive graph the attachment number divides the radius. In this section we use the alternating jump parameter and some of the results from the previous sections to address this question.

We first prove that in the case that $a$ does not divide $r$, the set $Q_G(\G)$ in fact consists of a single number, that is, the parameters $q_t$ and $q_h$ from Section~\ref{sec:jump} coincide. 

\begin{lemma}
\label{le:andivr}
Let $\G$ be a tetravalent $G$-half-arc-transitive graph for some $G \leq \aut(\G)$ and let $r = \rad_G(\G)$, $a = \att_G(\G)$ and $q = \jum_G(\G)$. If $a$ does not divide $r$ and $a \neq |V(\G)|$ then $q^2 \equiv \pm 1 \pmod{a}$, and so $Q_G(\G) = \{q\}$. 
\end{lemma}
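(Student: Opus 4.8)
The plan is to show that the hypotheses force $q_t = q_h$; once this is established, Lemma~\ref{le:inverse} immediately gives $q^2 = q_t q_h \equiv \pm 1 \pmod{a}$ and $Q_G(\G) = \{q_t\} = \{q\}$. As in the previous lemmas, I would first dispose of the degenerate cases $a \le 2$ (where $Q_G(\G)$ is already a singleton, namely $\{0\}$ or $\{1\}$), so that Proposition~\ref{pro:stab} and Lemma~\ref{le:mult} become available, and assume $3 \le a < 2r$ (the bound $a < 2r$ is exactly the exclusion $a \neq |V(\G)|$, which guarantees $C \neq C'$ and that consecutive attachment points are genuinely $\ell$ apart). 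I would then record the only arithmetic consequence of $a \nmid r$ that the argument needs: writing $\ell = 2r/a$, the condition $a \nmid r$ is equivalent to $\ell$ being odd.

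The key structural observation is a source/sink count on the two alternating cycles through a vertex. Adopting the notation of Lemma~\ref{le:mult}, I would note that every vertex uses its two out-arcs on exactly one of its two alternating cycles and its two in-arcs on the other; hence each vertex is a tail of both incident arcs (a \emph{source}) on one of its alternating cycles and a head of both (a \emph{sink}) on the other, and along a single alternating cycle these two types alternate, so the type of $u_k$ is determined by the parity of $k$. Since $v = u_0$ is a source on $C$, it is a sink on $C'$, and because $\ell$ is odd the vertex $w := u_\ell$ is a sink on $C$ and is therefore a source on the other alternating cycle through it, which is precisely $C'$. This is the step where $a \nmid r$ is essential: were $\ell$ even, $u_\ell$ would be a source on $C$ instead, and the computation below would merely reproduce $q_t$.

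Now I would compute $q_t$ using the vertex $w$ rather than $v$, exploiting that $q_t$ does not depend on the chosen vertex. At $w$ the cycle on which $w$ is a source is $C'$, and the two attachment-neighbours of $w$ along $C'$ are $v_{(q_t+1)\ell}$ and $v_{(q_t-1)\ell}$, since $w = v_{q_t\ell}$ and consecutive attachment points on $C'$ are $\ell$ apart. Inverting the relation of Lemma~\ref{le:mult}, namely $v_{m\ell} = u_{m q_t^{-1}\ell}$ with $q_t^{-1}$ the inverse of $q_t$ modulo $a$ (which exists by Lemma~\ref{le:inverse}), these neighbours sit on the sink cycle $C$ at $u_{(1 + q_t^{-1})\ell}$ and $u_{(1 - q_t^{-1})\ell}$, that is, $\pm q_t^{-1}$ attachment-steps away from $w = u_\ell$. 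By the definition of $q_t$ applied at $w$ this yields $q_t = q_t(w) = \min\{\,q_t^{-1},\, -q_t^{-1}\,\}$ in $\ZZ_a$, which by Lemma~\ref{le:inverse} is exactly $q_h$. Hence $q_t = q_h$, and the conclusion follows.

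The main obstacle I anticipate is the careful bookkeeping in the two previous paragraphs: correctly tracking which of $C$ and $C'$ carries $w$ as a source (the whole argument hinges on $\ell$ being odd here), and keeping the two moduli straight, since positions along a cycle are read modulo $2r$ while attachment-step counts and the inverse $q_t^{-1}$ live modulo $a$. Once the source/sink dichotomy and the inversion $v_{m\ell} = u_{m q_t^{-1}\ell}$ are set up cleanly, the identification $q_t(w) = q_h$ is a short computation.
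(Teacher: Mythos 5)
Your proposal is correct and follows essentially the same route as the paper: both exploit that $a \nmid r$ makes $\ell = 2r/a$ odd, so that $w = u_\ell$ is a sink on $C$ and hence a source on $C'$, and then evaluate a jump parameter at $w$ and invoke vertex-independence to conclude $q_t = q_h$, finishing with Lemma~\ref{le:inverse}. The only (cosmetic) difference is that the paper computes $q_h(w) = q_t$ directly from $u_{2\ell} = v_{2q_t\ell}$ of Lemma~\ref{le:mult}, whereas you compute $q_t(w) = q_h$ via the inverted relation $v_{m\ell} = u_{mq_t^{-1}\ell}$; these are mirror images of the same calculation.
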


\begin{proof}
Fix one of the two $G$-induced orientations of the edges of $\G$ and let $v \in V(\G)$. Let $q_t = q_t(v)$ and $q_h = q_h(v)$, so that $Q_G(\G) = \{q_t, q_h\}$ (confront Section~\ref{sec:jump}). Let $C=(u_0,u_1, \ldots, u_{2r-1})$ and $C'=(v_0,v_1, \ldots, v_{2r-1})$ be the two $G$-alternating cycles containing $v$, where $u_0 = v_0 = v$, $v$ is the tail of the two arcs of $C$, incident to it, and $u_{\L} = v_{q_t\L}$, where $\L = 2r/a$. Observe that, since $a \nmid r$ but $a \mid 2r$, the number $\L$ is odd. It follows that $u_{i\L}$ is the tail of the two arcs of $C$, incident to it, if and only if $i$ is even. In particular, $u_0$ is the tail and $u_{\L}$ is the head of the two arcs of $C$, incident to $u_0$ and $u_\L$, respectively. By Lemma~\ref{le:mult} we have that $u_{2\L}=v_{2q_t\L}$, and so $q_h(u_{\L}) = q_t$ (recall that $u_\L = v_{q_t\L}$ and $q_t \leq a/2$). Thus $q_h = q_t = q$, and so Lemma~\ref{le:inverse} implies $q^2 \equiv \pm 1 \pmod{a}$, as claimed. 
\end{proof}

The above lemma has the following useful corollary, describing the action of certain elements of the kernel $K_G(\Alt_G(\G))$.

\begin{corollary}
\label{cor:rotation}
Let $\G$ be a tetravalent $G$-half-arc-transitive graph for some $G \leq \aut(\G)$ such that $3 \leq a < r$, where $r = \rad_G(\G)$ and $a = \att_G(\G)$. Suppose $a$ does not divide $r$, let $q = \jum_G(\G)$, let $C$ be a $G$-alternating cycle of $\G$ and let $\gamma \in K_G(\Alt_G(\G))$ be a nontrivial element of the kernel of the action of $G$ on the graph $\Alt_G(\G)$. Then for some $0 < k \leq a/2$ the restriction of the action of $\gamma$ to $C$ is a $k\L$-step rotation, where $\L = 2r/a$. Moreover, for any two non-disjoint $G$-alternating cycles of $\G$ the restriction of the action of $\gamma$ to them is a $k\L$-step rotation on one of them and a $qk\L$-step rotation on the other. 
\end{corollary}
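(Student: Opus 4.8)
The plan is to obtain the result from two applications of the rotation argument of Lemma~\ref{le:rotation}, and then to globalise it using the connectedness of $\Alt_G(\G)$ together with the identity $q^2\equiv\pm1\pmod a$ supplied by Lemma~\ref{le:andivr}. First I would note, as at the beginning of the proof of Theorem~\ref{the:allkernels}, that an element $\gamma\in K_G(\Alt_G(\G))$ fixes every $G$-alternating cycle and hence also every $G$-attachment set (these being intersections of $G$-alternating cycles). In particular $\gamma$ fixes $C$ and every $G$-attachment set meeting $C$, so the hypotheses of Lemma~\ref{le:rotation} hold. Writing $C=(u_0,\dots,u_{2r-1})$ and using $\L=2r/a\ge 3$ (which follows from $a<r$), the first paragraph of that proof gives that $\gamma$ restricts on $C$ to a $k\L$-step rotation for a unique $0\le k\le a/2$ (after possibly reversing the labelling of $C$).

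To see that $k>0$ I would invoke semiregularity: since $a<r$, no two neighbours of a vertex lie on the same pair of $G$-alternating cycles, so, exactly as in the proof of Theorem~\ref{the:kernel}, $K_G(\Alt_G(\G))$ acts semiregularly on $V(\G)$. As $\gamma\ne 1$ it fixes no vertex, and a $0$-step rotation would fix $u_0$; hence $k\ne 0$ and $0<k\le a/2$, which is the first assertion.

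For the \emph{moreover} part I would first check that Lemma~\ref{le:andivr} applies. By hypothesis $a\nmid r$, and the $2r$ vertices of $C$ are distinct, so $|V(\G)|\ge 2r>a$ and thus $a\ne|V(\G)|$. Hence $Q_G(\G)=\{q\}$, i.e.\ $q_t=q_h=q$. Now let $D,D'$ be any two non-disjoint $G$-alternating cycles and suppose $\gamma$ acts on $D$ as an $m\L$-step rotation. Labelling $D=(u_0,\dots,u_{2r-1})$, $D'=(v_0,\dots,v_{2r-1})$ with $u_0=v_0$ and $u_\L=v_{q'\L}$ for the appropriate $q'\in Q_G(\G)$, the equality $q_t=q_h$ forces $q'=q$; repeating the computation from the second paragraph of the proof of Lemma~\ref{le:rotation} verbatim (using $u_{i\L}=v_{iq\L}$ from Lemma~\ref{le:mult} and $v_\L=u_{q^{-1}\L}$) then shows that $\gamma$ acts on $D'$ as a $qm\L$-step rotation. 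Thus passing to a non-disjoint cycle multiplies the rotation magnitude by the single number $q$.

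Finally I would globalise. As $\G$ is connected, so is $\Alt_G(\G)$; joining an arbitrary cycle to $C$ by a chain of successively non-disjoint cycles and iterating the previous step shows that $\gamma$ acts on that cycle as a $q^n k\L$-step rotation for some $n\ge 0$. Since $q^2\equiv\pm1\pmod a$ by Lemma~\ref{le:andivr}, we have $q^n k\equiv\pm k$ or $\pm qk\pmod a$, so every $G$-alternating cycle carries, up to orientation, rotation magnitude $k$ or $qk$. Together with the factor-$q$ relation for non-disjoint cycles this gives the stated dichotomy: on any two non-disjoint cycles $\gamma$ is a $k\L$-step rotation on one and a $qk\L$-step rotation on the other. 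I expect the main difficulty to be the careful bookkeeping of the sign ambiguities hidden in the phrase ``$k\L$-step rotation'' (absorbed into the freedom of orienting each cycle), and, more essentially, the twofold role of Lemma~\ref{le:andivr}: the equality $q_t=q_h$ guarantees that the multiplier between non-disjoint cycles is always the single parameter $q$, while $q^2\equiv\pm1$ forces the multiplicative orbit of $k$ to collapse to the two values $k$ and $qk$.
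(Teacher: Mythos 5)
Your proposal is correct and follows essentially the same route as the paper: invoke the rotation argument from the proof of Lemma~\ref{le:rotation} on $C$, use Lemma~\ref{le:andivr} both to force $q_t=q_h=q$ (so the multiplier between non-disjoint cycles is always $q$) and to collapse the powers of $q$ via $q^2\equiv\pm1\pmod{a}$, and then propagate along a chain of non-disjoint cycles by connectedness. Your only additions are explicit justifications of points the paper leaves implicit, namely $k>0$ via semiregularity and $a\neq|V(\G)|$ so that Lemma~\ref{le:andivr} applies.
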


\begin{proof}
Note that the assumption $\gamma \in K_G(\Alt_G(\G))$ implies that $\gamma$ fixes $C$, as well as all the $G$-attachment sets containing the vertices of $C$, and so the existence of a suitable $0 < k \leq a/2$ is established in the proof of Lemma~\ref{le:rotation}. Moreover, that proof also shows that for each of the $G$-alternating cycles $C'$, having a non-empty intersection with $C$, the restriction of the action of $\gamma$ to $C'$ is a $qk\L$-step rotation. We can now repeat the same argument for each $C'$. Since $\gamma$ acts as a $qk\L$-step rotation on $C'$, it acts as a $k\L$-step rotation on each of the $G$-alternating cycles, having a non-empty intersection with $C'$ (recall that $q^2 \equiv \pm 1 \pmod{a}$ holds by Lemma~\ref{le:andivr}). By connectedness it now follows that for each pair of non-disjoint $G$-alternating cycles of $\G$ the restriction of the action of $\gamma$ on them is a $k\L$-step rotation on one of them and a $qk\L$-step rotation on the other one.
\end{proof}

We next show that in the case when $a$ does not divide $r$, the graph $\Alt_G(\G)$ is necessarily bipartite, unless possibly if $q = 1$ or $q = a/2-1$. 

\begin{lemma}
\label{le:bipartite}
Let $\G$ be a tetravalent $G$-half-arc-transitive graph for some $G \leq \aut(\G)$ such that $a \neq |V(\G)|$ and that $a$ does not divide $r$, where $r = \rad_G(\G)$ and $a = \att_G(\G)$. If $\jum_G(\G)\notin \{1, a/2 - 1\}$, then the graph $\Alt_G(\G)$ is bipartite. In other words, if $\Alt_G(\G)$ is not bipartite, then either $\jum_G(\G) = 1$ or  $\jum_G(\G) = a/2 - 1$ holds.
\end{lemma}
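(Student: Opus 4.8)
The plan is to exhibit an explicit proper $2$-colouring of the vertices of $\Alt_G(\G)$, built from the way a generator of the kernel $K = K_G(\Alt_G(\G))$ rotates the alternating cycles. First I would dispose of the degenerate cases. Since $a \mid 2r$ but $a \nmid r$, the number $a$ is even and $\L = 2r/a$ is odd; moreover $a \neq |V(\G)|$ rules out $a = 2r$, so $\Alt_G(\G)$ has at least three (connected) vertices, whence $\L \geq 3$ and $a \leq 2r/3 < r$. If $a = 2$ then $Q_G(\G) = \{1\}$, so $\jum_G(\G) = 1$ lies in the excluded set and there is nothing to prove; thus I may assume $a \geq 4$, placing us in the regime $3 \leq a < r$ with $a \nmid r$ in which Corollary~\ref{cor:rotation} applies. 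By Lemma~\ref{le:inverse} we have $\gcd(a,q)=1$ for $q = \jum_G(\G)$, so $q$ is odd, and by Lemma~\ref{le:andivr} we have $q^2 \equiv \pm 1 \pmod a$.

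Next I would invoke Theorem~\ref{the:kernel}(v): in this regime $K$ is cyclic of order $a/2 \geq 2$, and since $a<r$ it acts semiregularly on $\G$, hence faithfully on any single alternating cycle $C$ as a group of rotations by multiples of $\L$. As these rotations form a cyclic group of order $a$, the image of $K$ is its unique subgroup of order $a/2$, so a fixed generator $\gamma$ of $K$ restricts to $C$ as a rotation by $k\L$ with $\gcd(k,a)=2$ (normalising the step to lie in $(0,a/2]$ as in Corollary~\ref{cor:rotation}). By that corollary, on \emph{every} alternating cycle $\gamma$ restricts to a rotation by either $k\L$ or $qk\L$ (read up to the orientation of the cycle, i.e.\ identifying a rotation by $k\L$ with one by $-k\L$), and across every edge of $\Alt_G(\G)$ exactly one endpoint carries each of these two amounts; here the relation $q^2 \equiv \pm 1 \pmod a$ is precisely what ensures that traversing two edges returns $\gamma$ to the amount $k\L$, so that the two amounts partition $V(\Alt_G(\G))$. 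The colouring I propose is to colour each cycle by which of the two reduced amounts $\gamma$ induces on it. Because $\gamma$ acts on each cycle in one definite way this is a genuine function on the vertices, and by the corollary adjacent cycles receive opposite colours, so it is a proper $2$-colouring—provided the two amounts are genuinely distinct.

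The heart of the argument, and the point that pins down exactly the exceptional values, is the verification that $k\L$ and $qk\L$ are distinct rotations up to sign, i.e.\ that $qk \not\equiv \pm k \pmod a$. Using $\gcd(k,a)=2$, the congruence $qk \equiv k \pmod a$ reduces to $q \equiv 1 \pmod{a/2}$ and $qk \equiv -k \pmod a$ to $q \equiv -1 \pmod{a/2}$; since $q$ is odd and coprime to $a$ we have $1 \le q \le a/2 - 1$, so these hold precisely when $q = 1$ or $q = a/2 - 1$. Hence for $\jum_G(\G) \notin \{1, a/2 - 1\}$ the two colour classes are disjoint and the $2$-colouring is proper, proving $\Alt_G(\G)$ bipartite. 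I expect the only real obstacle to be bookkeeping around the sign convention: one must consistently identify a rotation by $k\L$ steps with one by $-k\L$ steps (the alternating cycle carries no preferred orientation, which is also why Corollary~\ref{cor:rotation} normalises its step to $(0,a/2]$) and check that this identification is compatible with $q^2 \equiv \pm 1 \pmod a$, so that the two colour classes are genuinely well defined. Once the convention is fixed, the distinctness computation above is immediate.
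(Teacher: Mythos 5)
Your proof is correct and follows essentially the same route as the paper's: both exhibit a nontrivial element of $K_G(\Alt_G(\G))$ acting as a rotation by an even multiple of $\L$, invoke Corollary~\ref{cor:rotation} to see that the amounts $k\L$ and $qk\L$ alternate across edges of $\Alt_G(\G)$, and deduce bipartiteness from the computation that $qk \equiv \pm k \pmod{a}$ forces $q \in \{1, a/2-1\}$. The only difference is in how the kernel element is produced: the paper constructs it directly as the $2\L$-step rotation (i.e.\ $k=2$) guaranteed by edge-regularity (Proposition~\ref{pro:stab}) together with Lemma~\ref{le:rotation}, rather than invoking Theorem~\ref{the:kernel}(v) to take a generator of $K$ with $\gcd(k,a)=2$, but both choices feed into the same final divisibility argument.
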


\begin{proof}
Denote $q = \jum_G(\G)$ and recall that Lemma~\ref{le:andivr} implies that $Q_G(\G) = \{q\}$. Suppose $q \notin \{1, a/2-1\}$. Then Lemma~\ref{le:inverse} implies that $a \geq 3$. Let $C = (u_0,u_1,\ldots , u_{2r-1})$ be a $G$-alternating cycle of $\G$ and let $\gamma \in G$ be the unique (confront Proposition~\ref{pro:stab}) element mapping $u_0$ to $u_{2\L}$ and $u_1$ to $u_{2\L+1}$, where $\L = 2r/a$ (observe that such a $\gamma$ does indeed exist since $2\L$ is even). Since $a \geq 3$ we have $2\L < 2r$, and so $\gamma$ is a nontrivial automorphism. It is clear that $\gamma$ is a $2\L$-step rotation of $C$, and so it fixes $C$ as well as all the $G$-attachment sets containing the vertices of $C$. By Lemma~\ref{le:rotation} we have $\gamma \in K_G(\Alt_G(\G))$, and so Corollary~\ref{cor:rotation} implies that for any pair of non-disjoint $G$-alternating cycles of $\G$ the restriction of the action of $\gamma$ to one of them is a $2\L$-step rotation while the restriction of its action to the other is a $2q\L$-step rotation. It now only remains to see that a $2\L$-step rotation of a cycle of length $2r$ is not a $2q\L$-step rotation of this cycle (in any of the two possible directions). If this was true, then one of $2\L \equiv 2q\L \pmod{2r}$ and $2\L \equiv -2q\L \pmod{2r}$ would have to hold, implying that one of $2\L(q-1)$ and $2\L(q+1)$ is divisible by $2r = a\L$. However, since by assumption $1 < q < a/2 - 1$ holds, none of these is possible. 
\end{proof}

We remark that tetravalent graphs $\G$, admitting a half-arc-transitive group of automorphisms $G \leq \aut(\G)$ such that $\att_G(\G)$ does not divide $\rad_G(\G)$ and $\Alt_G(\G)$ is not bipartite do exist. For instance, the results of~\cite{PotSpa17} show that any (non-bipartite) 2-arc-transitive cubic graph is the graph $\Alt_G(\G)$ for some tetravalent graph $\G$ admitting a half-arc-transitive group of automorphisms $G$ for which $\att_G(\G) = 2$ and $\rad_G(\G) = 3$, thus providing infinitely many examples $\G$ with the above mentioned situation. Of course, since $\att_G(\G) = 2$ we have $\jum_G(\G) = 1$ for all of these cases. We know of no example however, where $\jum_G(\G) = \att_G(\G)/2-1 > 1$ would hold, which thus leads us to the following question.

\begin{question}
\label{que:bip}
Does there exist a tetravalent graph $\G$, admitting a half-arc-transitive group of automorphisms $G \leq \aut(\G)$, such that $a \neq |V(\G)|$, $a \nmid r$ and $q = a/2-1 > 1$, where $a = \att_G(\G)$, $r = \rad_G(\G)$ and $q = \jum_G(\G)$, but the graph $\Alt_G(\G)$ is not bipartite?
\end{question}

We now give a result which is a considerable improvement of the results of~\cite{PotSpa17} towards the answer to Question~1 of~\cite{PotSpa17}. Moreover, if one is able to show that the answer to Question~\ref{que:bip} is negative, the next theorem will in fact almost completely solve Question~1 from~\cite{PotSpa17}, since the only remaining case will be the one when $\att_G(\G) = 4$.

\begin{theorem}
\label{the:andivr}
Let $\G$ be a tetravalent $G$-half-arc-transitive graph for some $G \leq \aut(\G)$ and let $r = \rad_G(\G)$, $a = \att_G(\G)$ and $q = \jum_G(\G)$. Suppose $a$ does not divide $r$ and $4 < a < r$. If $q = 1$ or the graph $\Alt_G(\G)$ is bipartite, then there exists an automorphism $\rho$ of $\G$, fixing all of the $G$-alternating cycles of $\G$ and acting as a $2r/a$-step rotation on at least one of them. Consequently, the graph $\G$ is arc-transitive. In particular, if $4 < a < r$, $a$ does not divide $r$ and $q \neq a/2-1$, then $\G$ is arc-transitive.
\end{theorem}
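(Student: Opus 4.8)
The plan is to produce the required automorphism $\rho$ by prescribing, one $G$-alternating cycle at a time, a rotation of each such cycle, and then to verify that these local rotations glue into a single permutation of $V(\G)$. First I would record the reductions. Since $a\mid 2r$ but $a\nmid r$, the number $a$ is even (otherwise $a\mid r$) and, as in the proof of Lemma~\ref{le:andivr}, $\L=2r/a$ is odd; moreover $Q_G(\G)=\{q\}$ with $q^2\equiv\pm1\pmod a$, so in particular $q^{-1}\equiv\pm q\pmod a$. As $3\le a<r$, Theorem~\ref{the:kernel}(v) gives $K:=K_G(\Alt_G(\G))=\langle g\rangle\cong\ZZ_{a/2}$, and by Corollary~\ref{cor:rotation} the generator $g$ acts on every $G$-alternating cycle as a rotation; fixing a base cycle $C_0=(u_0,\dots,u_{2r-1})$ on which $g$ is the $2\L$-step rotation will be the anchor of the construction.

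For an arbitrary $G$-alternating cycle $C$ I want $\rho|_C$ to be the $m_C\L$-step rotation, with $m_{C_0}=1$, so that $\rho|_{C_0}\colon u_i\mapsto u_{i+\L}$. The gluing of Lemma~\ref{le:mult} dictates the propagation across an edge $C\sim C'$ of the (connected) graph $\Alt_G(\G)$: when $C\cap C'$ is identified via $u_{i\L}=v_{\pm iq\L}$, any rotation that is an $n\L$-step rotation on $C$ and agrees with it on $C\cap C'$ is forced to be the $\pm qn\L$-step rotation on $C'$, so $m_{C'}\equiv\varepsilon_{CC'}\,q\,m_C\pmod a$ for a sign $\varepsilon_{CC'}$ that depends only on the (fixed) labellings of $C$ and $C'$. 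Propagating from $C_0$ then assigns an $m_C$ to each cycle, and $\rho$ is well defined exactly when the holonomy $P$ — the product of the factors $\varepsilon\,q$ around an arbitrary closed walk — satisfies $P\equiv1\pmod a$. The key observation is that the same propagation rule, with the same signs, governs the amounts of $g$, the only difference being that $g$ starts from $2\L$ rather than $\L$ on $C_0$.

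The hard part will be to upgrade the obvious ``$P\equiv\pm1$'' to the genuine ``$P\equiv1$''. I would split this into the residue mod $a/2$ and the sign. For the first, $g$ is an honest automorphism, so its rotation on $C_0$ returns to $2\L$ after any closed walk; since $g$ obeys the propagation rule, this reads $2P\L\equiv2\L\pmod{2r}$, i.e. $P\equiv1\pmod{a/2}$, whence $P\in\{1,\,1+a/2\}$ modulo $a$. For the sign I would use the hypotheses: if $q=1$ every factor is $\pm1$ and $P\equiv\pm1\pmod a$ at once, while if $\Alt_G(\G)$ is bipartite then every closed walk has even length, so the $q$-part of $P$ is $q^{\mathrm{even}}\equiv\pm1\pmod a$ (using $q^2\equiv\pm1$) and again $P\equiv\pm1\pmod a$. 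Combining, $P\in\{1,1+a/2\}$ and $P\equiv\pm1\pmod a$; but $1+a/2\equiv-1\pmod a$ holds only for $a=4$ and $1+a/2\not\equiv1\pmod a$, so the standing assumption $a>4$ forces $P=1$. This is the step where all three hypotheses ($a>4$, and $q=1$ or bipartiteness) are genuinely used; pinning down the sign is the crux, since the weaker ``$P\equiv\pm1$'' would only determine $\rho|_{C_0}$ up to its inverse, and the case $a=4$ is precisely where the argument breaks.

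With $P\equiv1\pmod a$ the amounts $m_C$ are well defined, so $\rho$ is a well-defined bijection of $V(\G)$ restricting to a rotation of each $G$-alternating cycle. Since the $G$-alternating cycles partition $E(\G)$ (see~\cite{Mar98}) and $\rho$ maps each of them onto itself preserving its edges, $\rho$ is an automorphism of $\G$ fixing every $G$-alternating cycle and acting as the $\L$-step rotation on $C_0$, as required. Finally, because $\L$ is odd, $\rho$ sends the arc $(u_0,u_1)$ to $(u_\L,u_{\L+1})$, which lies in the $G$-orbit on $A(\G)$ opposite to that of $(u_0,u_1)$; as $G$ is transitive on each of its two arc-orbits, $\langle G,\rho\rangle$ is transitive on $A(\G)$, so $\G$ is arc-transitive. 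The concluding ``in particular'' assertion then follows by combining this with Lemma~\ref{le:bipartite}: if $q\neq a/2-1$ then either $q=1$, or $q\notin\{1,a/2-1\}$ and (noting $a<r$ forces $a\neq|V(\G)|$) Lemma~\ref{le:bipartite} makes $\Alt_G(\G)$ bipartite, so in every case the hypotheses just used are met and $\G$ is arc-transitive.
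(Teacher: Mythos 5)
Your proof is correct, and it rests on the same core construction as the paper's: both take an element $\gamma$ of the kernel $K_G(\Alt_G(\G))$ acting as a $2\L$-step rotation on a chosen cycle and build $\rho$ as a ``square root'' of it, prescribing an $\L$- or $q\L$-step rotation on each $G$-alternating cycle, with the hypotheses ($q=1$ or bipartiteness of $\Alt_G(\G)$) governing the global choice and $a>4$ excluding the degenerate coincidences. Where you genuinely diverge is in how consistency of the prescription is verified. The paper labels all cycles simultaneously, anchoring every labelling to $\gamma$ (its set $I$ of cycles receiving the $\L$-rotation is everything when $q=1$, the class on which $\gamma$ is a $2\L$-rotation when $1<q<a/2-1$, and a bipartition class when $q=a/2-1$), and then checks well-definedness \emph{locally}, at a single vertex lying on two cycles, the wrong sign being excluded because $4q\L\equiv 0\pmod{2r}$ would force $a\leq 4$. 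You instead propagate from a base cycle and reduce well-definedness to triviality of the holonomy $P$ along closed walks of $\Alt_G(\G)$: the automorphism $\gamma$ forces $P\equiv 1\pmod{a/2}$, the hypotheses force $P\equiv\pm1\pmod a$, and $a>4$ separates $1$ from $1+a/2$. The two verifications carry the same content (both ultimately rest on Lemma~\ref{le:mult}, Corollary~\ref{cor:rotation} and $a>4$), but yours isolates more transparently where each hypothesis enters, while the paper's stays elementary and avoids any spanning-tree/holonomy formalism.

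One small repair is needed at the start. Theorem~\ref{the:kernel}(v) together with Corollary~\ref{cor:rotation} does not by itself give a \emph{generator} $g$ of $K=K_G(\Alt_G(\G))$ acting on $C_0$ as the $2\L$-step rotation: a generator acts as a $k\L$-step rotation for some $k$ with $\gcd(k,a)=2$, not necessarily $k=2$. Take instead $g$ to be the element of $K$ whose restriction to $C_0$ is exactly the $2\L$-step rotation. Such an element exists: every element of $K$ fixes $C_0$ and preserves the $G$-induced orientation, and since $\L$ is odd it must rotate $C_0$ by an even multiple of $\L$; as $K$ restricts faithfully to $C_0$ (semiregularity) and $|K|=a/2$, the restriction is the full cyclic group generated by the $2\L$-rotation. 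Alternatively, construct $g$ directly as in the proof of Lemma~\ref{le:bipartite}, as the unique element of $G$ mapping $(u_0,u_1)$ to $(u_{2\L},u_{2\L+1})$, which lies in $K$ by Lemma~\ref{le:rotation}. With this adjustment your argument goes through unchanged.
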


\begin{proof}
Fix one of the two $G$-induced orientations of the edges of $\G$. Since $a$ does not divide $r$, the number $\L = 2r/a$ is odd and  Lemma~\ref{le:andivr} implies that $Q_G(\G) = \{q\}$. Moreover, for each $G$-alternating cycle $C = (u_0, u_1, \ldots , u_{2r-1})$, where $u_0$ is the tail of the two arcs of $C$, incident with it, the vertices of the form $u_{2i\L}$ are the tails of the two arcs of $C$, incident with them, while the vertices of the form $u_{(2i+1)\L}$ are the heads of the two arcs of $C$, incident with them. Now, choose a $G$-alternating cycle $C$ and, as in the proof of Lemma~\ref{le:bipartite}, let $\gamma \in K_G(\Alt_G(\G))$ be such that its restriction to $C$ is a $2\L$-step rotation. By Corollary~\ref{cor:rotation} the restriction of the action of $\gamma$ to any two non-disjoint $G$-alternating cycles is a $2\L$-step rotation on one of them and a $2q\L$-step rotation on the other. In what follows we show that we can construct an automorphism $\rho$ of $\G$ such that $\rho^2 = \gamma$, having all the required properties.

To this end we first give two different labels to each vertex of $\G$. Before doing this we number the $G$-alternating cycles by denoting them with $C_1, C_2, \ldots , C_s$ and we choose a certain subset $I$ of the index set $S = \{1,2,\ldots, s\}$ in the following way. If $q = 1$ then set $I = S$. Suppose now that $q > 1$. By assumption $\Alt_G(\G)$ is bipartite in this case, and so the argument from the previous paragraph implies that the restriction of the action of $\gamma$ to the $G$-alternating cycles from one of the two sets of bipartition is a $2\L$-step rotation while its restriction to the $G$-alternating cycles from the other set of bipartition is a $2q\L$-step rotation. If $q \neq a/2-1$, then a $2\L$-step rotation is different from a $2q\L$-step rotation (in any of the two possible directions). In this case let $I$ be the set of all $i \in S$ for which the restriction of the action of $\gamma$ on $C_i$ is a $2\L$-step rotation. If however $q = a/2-1$ then simply choose one of the two sets of bipartition of $\Alt_G(\G)$ and let $I$ be the set of the indexes of the $G$-alternating cycles $C_i$ belonging to it. Observe that, in any case, for each $i \in I$ the restriction of the action of $\gamma$ to $C_i$ is a $2\L$-step rotation while for each $i \in S \setminus I$ the restriction of $\gamma$ to $C_i$ is a $2q\L$-step rotation. 

We now label the vertices of each $C_i$ by $v_j^i$, $j \in \ZZ_{2r}$, in such a way that $v_j^i$ and $v_{j+1}^i$ are consecutive vertices of $C_i$ for each $j$ and that $v_j^i\gamma = v_{j+2\L}^i$ for each $j \in \ZZ_{2r}$ whenever $i \in I$, while $v_j^i\gamma = v_{j+2q\L}^i$ for each $j \in \ZZ_{2r}$ whenever $i \in S \setminus I$. Observe that, since $q$ is coprime to $a$, each of $4\L \equiv 0 \pmod{2r}$ and $4q\L \equiv 0 \pmod{2r}$ contradicts $a > 4$, and so the above described labeling is unique up to cyclic rotations. Note however, that in this way each vertex received two different labels. 

We are now ready to define the mapping $\rho$, satisfying all of the properties from the statement of the theorem. We set
$$
	v_j^i\rho = \left\{\begin{array}{lcl}
		v_{j+\L}^i & ; & i \in I\\
		v_{j+q\L}^i & ; & i \in S \setminus I.
		\end{array}\right.
$$
To prove that $\rho$ is a well defined mapping let $v$ be a vertex of $\G$. Without loss of generality assume it belongs to the $G$-alternating cycles $C_1$ and $C_2$. We thus have $v = v_{j_1}^1 = v_{j_2}^2$ for some $j_1, j_2 \in \ZZ_{2r}$. We can further assume that we have $v_j^1\gamma = v_{j+2\L}^1$ and $v_j^2\gamma = v_{j+2q\L}^2$ for each $j \in \ZZ_{2r}$ (note that if $q = 1$ then $2q\L = 2\L$). Now, since $C_1$ and $C_2$ are non-disjoint and $Q_G(\G) = \{q\}$, it follows that $v_{j_1 + \L}^1 \in \{v_{j_2+q\L}^2, v_{j_2-q\L}^2\}$. If $v_{j_1 + \L}^1 = v_{j_2 - q\L}^2$, then Lemma~\ref{le:mult} implies that $v_{j_1 + 2\L}^1 = v_{j_2 - 2q\L}^2$. However, as $v_{j_1 + 2\L}^1 = v_{j_1}^1\gamma = v\gamma = v_{j_2}^2\gamma = v_{j_2 + 2q\L}^2$, this yields $v_{j_2 - 2q\L}^2 = v_{j_2 + 2q\L}^2$. But then $4q\L \equiv 0 \pmod{a\L}$, contradicting $a > 4$ (recall that $q$ is coprime to $a$). It thus follows that $v_{j_1+\L}^1 = v_{j_2 + q\L}^2$, and so $\rho$ is well defined, as claimed. That $\rho$ is indeed an automorphism of $\G$ is now clear from the definition. Moreover, it preserves each $G$-alternating cycle but does not respect the $G$-induced orientation of the edges of $\G$. It follows that $\G$ is arc-transitive. The last part of the theorem is now an immediate consequence of Lemma~\ref{le:bipartite}.
\end{proof}

Let us wrap up the paper with the following remark. If the remaining cases, not covered by Theorem~\ref{the:andivr}, can be taken care of to give an affirmative answer to Question 1 of~\cite{PotSpa17}, Theorem~\ref{the:quotient} will have further implications for a possible classification of all tetravalent half-arc-transitive graphs. Namely, since the tightly attached graphs have already been classified, it will remain to classify the tetravalent graphs admitting a loosely attached half-arc-transitive action and to determine how to construct all other tetravalent half-arc-transitive graphs as cyclic covers of them. The difficult problem of classifying all tetravalent graphs admitting a loosely attached half-arc-transitive action, which was proposed already by Wilson~\cite{Wil04}, is thus one of the central problems to be considered in future investigations on tetravalent half-arc-transitive graphs.

\end{document}